\numberwithin{equation}{section}
\numberwithin{figure}{section}
\theoremstyle{plain}
\newtheorem{thm}{\protect\theoremname}
\theoremstyle{plain}
\newtheorem{conj}[thm]{\protect\conjecturename}
\theoremstyle{plain}
\newtheorem{prob}[thm]{\protect\problemname}
\theoremstyle{plain}
\newtheorem{lem}[thm]{\protect\lemmaname}
\theoremstyle{plain}
\newtheorem{prop}[thm]{\protect\propositionname}
\theoremstyle{plain}
\newtheorem{cor}[thm]{\protect\corollaryname}
\providecommand{\conjecturename}{Conjecture}
\providecommand{\lemmaname}{Lemma}
\providecommand{\theoremname}{Theorem}
\providecommand{\problemname}{Problem}
\providecommand{\propositionname}{Proposition}
\providecommand{\corollaryname}{Corollary}
\begin{document}


\title{Settling Some Sum Suppositions}

\author{Tanay Wakhare$^\ast$ and Christophe Vignat$^\dag$}

\thanks{{\scriptsize
\hskip -0.4 true cm MSC(2010): Primary: 11A63; Secondary: 05A18.
\newline Keywords: Prouhet-Tarry-Escott problem, digit sums.\\
}}

\address{$^\ast$~University of Maryland, College Park, MD 20742, USA}
\email{twakhare@gmail.com}
\address{$^\dag$~Tulane University, New Orleans, LA 70118, USA and Universit\'{e} Paris Sud, France}
\email{cvignat@tulane.edu, christophe.vignat@u-psud.fr}

\maketitle

\begin{abstract}
We solve multiple conjectures by Byszewski and Ulas about the sum of base $b$ digits function.
In order to do this, we develop general results about summations over
the sum of digits function. As a corollary, we describe an unexpected new result about the Prouhet-Tarry-Escott problem.
In some cases, this allows us to partition fewer than $b^N$ values into $b$ sets $\{S_1,\ldots,S_b\}$, such that 
$$\sum_{s\in S_1}s^k = \sum_{s\in S_2}s^k = \cdots = \sum_{s\in S_b}s^k $$
for $0\leq k \leq N-1$. The classical construction can only partition $b^N$ values such that the first $N$ powers agree. Our results are amenable to a computational search, which may discover new, smaller, solutions to this classical problem.
\end{abstract}

\section{Introduction}

The sum of base $b$ digits function $s_b(n)$ is ubiquitous in number theory and combinatorics; it has applications ranging from partitioning a set into equal valued subsets (the classical Prouhet-Tarry-Escott problem) \cite{PTE} to the modeling of quasicrystals \cite{Allouche2}. Developing the theory of summations over $s_b(n)$ and the related sequence $(-1)^{s_b(n)}$ therefore has diverse applications in pure mathematics. In particular, the classical solution to the Prouhet-Tarry-Escott problem is equivalent to proving that a certain sum over $(-1)^{s_2(n)}$ is zero, which highlights the power of this pure mathematical approach.

In this paper, using a variety of methods, we  
solve and generalize several conjectures proposed by Byszewski and Ulas \cite{Ulas}. 
As an intermediate step, we provide several general theorems about multiple summations involving the sequence $s_b(n)$, which may have 
other potential applications to questions about digit sums. While the questions we answer have a very special form, this paper highlights the usefulness of our methods, with potential applications to many other problems.

In particular, we prove and generalize the following two conjectures:
\begin{conj}\label{conj42}\cite[Conjecture 4.2]{Ulas}
For $r\ge1$ and $N_{1},\dots N_{r}$ positive integers, we have
\[
\sum_{n_{1}=0}^{2^{N_{1}}-1}\dots\sum_{n_{r}=0}^{2^{N_{r}}-1}\left(-1\right)^{\sum_{j=1}^{r}s_{2}\left(n_{j}\right)}\left(x+\sum_{j=1}^{r}n_{j}y_{j}\right)^{\sum_{j=1}^{r}N_{j}}=\left(-1\right)^{\sum_{j=1}^{r}N_{r}}2^{\sum_{j=1}^{r}\frac{N_{j}\left(N_{j}-1\right)}{2}}\left(\prod_{j=1}^{r}y_{j}^{N_{j}}\right)\left(\sum_{j=1}^{r}N_{j}\right)!.
\]
\end{conj}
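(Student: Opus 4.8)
The plan is to prove the identity by passing to exponential generating functions, which converts the multinomial power on the left into a coefficient-extraction problem and lets the multiplicativity of $(-1)^{s_{2}}$ over binary digits do the real work. Write $N=\sum_{j=1}^{r}N_{j}$ for the common exponent, and introduce
\[
F(t)=\sum_{n_{1}=0}^{2^{N_{1}}-1}\cdots\sum_{n_{r}=0}^{2^{N_{r}}-1}(-1)^{\sum_{j=1}^{r}s_{2}(n_{j})}\,\exp\!\Big(t\big(x+\textstyle\sum_{j=1}^{r}n_{j}y_{j}\big)\Big).
\]
Since the coefficient of $t^{N}$ in $e^{ta}$ is $a^{N}/N!$, the left-hand side of the conjecture equals $N!$ times the coefficient of $t^{N}$ in $F(t)$, so it suffices to compute that single coefficient.

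Next I would factor $F(t)$. Pulling out $e^{tx}$ and using that the $r$ summations decouple, one gets $F(t)=e^{tx}\prod_{j=1}^{r}G_{N_{j}}(ty_{j})$, where $G_{M}(u):=\sum_{n=0}^{2^{M}-1}(-1)^{s_{2}(n)}e^{nu}$. Writing $n=\sum_{i}\epsilon_{i}2^{i}$ in binary and using $(-1)^{s_{2}(n)}=\prod_{i}(-1)^{\epsilon_{i}}$, the sum over $n\in\{0,\dots,2^{M}-1\}$ becomes a product over the $M$ binary slots:
\[
G_{M}(u)=\prod_{i=0}^{M-1}\big(1-e^{2^{i}u}\big).
\]
This is the exponential analogue of the classical Prouhet--Tarry--Escott identity $\sum_{n=0}^{2^{M}-1}(-1)^{s_{2}(n)}z^{n}=\prod_{i=0}^{M-1}(1-z^{2^{i}})$.

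The key quantitative step is the local behaviour of each factor at $u=0$. From $1-e^{2^{i}u}=-2^{i}u+O(u^{2})$ we get that $G_{M}(u)$ vanishes to order exactly $M$, with leading term
\[
G_{M}(u)=(-1)^{M}\,2^{0+1+\cdots+(M-1)}\,u^{M}+O(u^{M+1})=(-1)^{M}\,2^{M(M-1)/2}\,u^{M}+O(u^{M+1}).
\]
Consequently $\prod_{j=1}^{r}G_{N_{j}}(ty_{j})$ vanishes to order $\sum_{j}N_{j}=N$ in $t$, with leading coefficient $\prod_{j=1}^{r}(-1)^{N_{j}}2^{N_{j}(N_{j}-1)/2}y_{j}^{N_{j}}=(-1)^{N}2^{\sum_{j}N_{j}(N_{j}-1)/2}\prod_{j}y_{j}^{N_{j}}$, and multiplying by $e^{tx}=1+O(t)$ does not affect the coefficient of the lowest-order term $t^{N}$. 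Hence $[t^{N}]F(t)=(-1)^{N}2^{\sum_{j}N_{j}(N_{j}-1)/2}\prod_{j}y_{j}^{N_{j}}$, and multiplying by $N!$ yields the claimed right-hand side (the exponent $\sum_{j=1}^{r}N_{r}$ printed in the statement being a typo for $N=\sum_{j=1}^{r}N_{j}$).

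I do not expect a genuine obstacle: the whole argument is an extension of the classical PTE generating-function identity to the exponential setting and to several independent blocks. The only point requiring care is the bookkeeping of orders of vanishing — that block $j$ contributes exactly $N_{j}$ and that no cancellation occurs among the leading terms — which is immediate from the product structure. As a remark I would also note an alternative route: expand $\big(x+\sum_{j}n_{j}y_{j}\big)^{N}$ by the multinomial theorem and apply the one-variable moment evaluations $\sum_{n=0}^{2^{M}-1}(-1)^{s_{2}(n)}n^{k}=0$ for $0\le k\le M-1$ and $=(-1)^{M}2^{M(M-1)/2}M!$ for $k=M$; the constraint $k_{1}+\cdots+k_{r}\le N$ together with $k_{j}\ge N_{j}$ forces $k_{0}=0$ and $k_{j}=N_{j}$, leaving a single surviving term. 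This second approach is the one most likely to adapt to the remaining conjectures of \cite{Ulas}.
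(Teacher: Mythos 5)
Your argument is correct, and it takes a genuinely different (and for this particular conjecture, more direct) route than the paper. The paper first establishes a general transform (Theorems \ref{Thm7} and \ref{thm2}): using the convolution between $\xi^{s_b(n)}$ and the weights $\beta_k^{(N-1)}$, it rewrites the twisted sum of $f(x+\sum n_j y_j)$ as an iterated finite difference $\prod_j \Delta_{k_j}^{N_j}$ applied to $f$, weighted by products of $\beta$'s; specializing to $f(x)=x^{\sum N_j}$ makes the differences collapse to the constant $(\sum_j N_j)!\prod_j y_j^{N_j}$, and the answer reduces to the zeroth moment $\sum_k\beta_k^{(N-1)}$ computed in Lemma \ref{lemma6}. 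You instead package everything into the exponential generating function $F(t)=e^{tx}\prod_j G_{N_j}(ty_j)$ with $G_M(u)=\prod_{i=0}^{M-1}(1-e^{2^iu})$ and read off the leading Taylor coefficient. The two computations are secretly the same: writing $\prod_i(1-z^{2^i})=(1-z)^M Q(z)$ with $Q(z)=\sum_k\alpha_k^{(M-1)}z^k$, your leading coefficient $\lim_{u\to0}G_M(u)/u^M=(-1)^MQ(1)$ is exactly the paper's moment $\sum_k\alpha_k^{(M-1)}=2^{M(M-1)/2}$, extracted via $1-e^u=-u+O(u^2)$ rather than via a logarithmic-derivative/evaluation-at-$1$ computation. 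What your route buys is brevity and transparency -- no $\beta$ weights, no $\Delta$ operators, and the order-of-vanishing bookkeeping is immediate from the product structure; it also extends verbatim to base $b$ (each local factor $\sum_{k=0}^{b-1}\xi^ke^{kb^iu}$ vanishes to first order with derivative $b^i\cdot b/(\xi-1)$, recovering the paper's $b^{N(N+1)/2}/(\xi-1)^N$). What the paper's heavier machinery buys is reusability: the finite-difference transform handles arbitrary functions $f$, not just the single monomial of top degree, and is what powers the later results on $H_{2,N}$ and the generalized Prouhet--Tarry--Escott statements. Your closing remark correctly identifies the typo $\sum_{j=1}^rN_r$ for $\sum_{j=1}^rN_j$, and your proposed alternative via one-variable moments is also sound (it is essentially the $r$-fold product of the $r=1$ case, which is how the paper handles Conjecture \ref{conj44}).
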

\begin{conj}\label{conj44}\cite[Conjecture 4.4]{Ulas}
For $r\ge1$ and $N_{1},\dots N_{r}$ positive integers, we have
\begin{equation}
\label{eq:conjecture2}
\sum_{n_{1}=0}^{2^{N_{1}}-1}\dots\sum_{n_{r}=0}^{2^{N_{r}}-1}\left(-1\right)^{\sum_{j=1}^{r}s_{2}\left(n_{j}\right)}\left(\sum_{j=1}^{r}s_2(n_j)x_j + n_{j}y_{j}\right)^{\sum_{j=1}^{r}N_{j}}
=\left(-1\right)^{\sum_{j=1}^{r}N_{r}}\left(\sum_{j=1}^{r}N_{j}\right)! \prod_{j=1}^r \prod_{i_j=0}^{N_j-1}(x_j+2^{i_j}y_j).
\end{equation}
\end{conj}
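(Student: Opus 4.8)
The plan is to collapse the $r$-fold sum on the left of \eqref{eq:conjecture2} into a single iterated finite difference of the one-variable monomial $g(u)=u^{N}$, where $N:=\sum_{j=1}^{r}N_{j}$ is exactly the exponent appearing in the statement. To set this up I would expand each summation variable in base $2$: writing $n_{j}=\sum_{i_{j}=0}^{N_{j}-1}\varepsilon_{i_{j}}^{(j)}2^{i_{j}}$ with $\varepsilon_{i_{j}}^{(j)}\in\{0,1\}$, the sum over $n_{j}\in\{0,\dots,2^{N_{j}}-1\}$ is the same as the sum over $(\varepsilon_{0}^{(j)},\dots,\varepsilon_{N_{j}-1}^{(j)})\in\{0,1\}^{N_{j}}$, with $s_{2}(n_{j})=\sum_{i_{j}}\varepsilon_{i_{j}}^{(j)}$ and $(-1)^{s_{2}(n_{j})}=\prod_{i_{j}}(-1)^{\varepsilon_{i_{j}}^{(j)}}$. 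Substituting, the argument of the power collapses to the single linear form
\[
\sum_{j=1}^{r}\bigl(s_{2}(n_{j})x_{j}+n_{j}y_{j}\bigr)=\sum_{j=1}^{r}\sum_{i_{j}=0}^{N_{j}-1}\varepsilon_{i_{j}}^{(j)}\bigl(x_{j}+2^{i_{j}}y_{j}\bigr).
\]
Relabelling the $N$ pairs $(j,i_{j})$ as $1,\dots,N$ and letting $c_{k}$ denote the associated coefficient $x_{j}+2^{i_{j}}y_{j}$, the left-hand side of \eqref{eq:conjecture2} becomes exactly
\[
\sum_{\varepsilon\in\{0,1\}^{N}}(-1)^{\varepsilon_{1}+\dots+\varepsilon_{N}}\Bigl(\sum_{k=1}^{N}\varepsilon_{k}c_{k}\Bigr)^{N}.
\]

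Next I would evaluate this sum, and the cleanest route is the multinomial theorem. Expanding $\bigl(\sum_{k}\varepsilon_{k}c_{k}\bigr)^{N}=\sum_{m_{1}+\dots+m_{N}=N}\binom{N}{m_{1},\dots,m_{N}}\bigl(\prod_{k}c_{k}^{m_{k}}\bigr)\prod_{k}\varepsilon_{k}^{m_{k}}$ and summing over $\varepsilon$ term by term, the factor $\prod_{k}\varepsilon_{k}^{m_{k}}$ equals $\prod_{k\,:\,m_{k}\ge 1}\varepsilon_{k}$ because $\varepsilon_{k}\in\{0,1\}$, so any index $k$ with $m_{k}=0$ contributes $\sum_{\varepsilon_{k}\in\{0,1\}}(-1)^{\varepsilon_{k}}=0$. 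Thus a term survives only when all $m_{k}\ge 1$, which together with $\sum_{k}m_{k}=N$ forces $m_{k}=1$ for every $k$; this unique term has multinomial coefficient $N!$ and contributes $\bigl(\prod_{k}c_{k}\bigr)\sum_{\varepsilon}(-1)^{\varepsilon_{1}+\dots+\varepsilon_{N}}\varepsilon_{1}\cdots\varepsilon_{N}=(-1)^{N}\prod_{k}c_{k}$. Hence the left-hand side equals $(-1)^{N}N!\prod_{k=1}^{N}c_{k}=(-1)^{N}\bigl(\sum_{j}N_{j}\bigr)!\prod_{j=1}^{r}\prod_{i_{j}=0}^{N_{j}-1}(x_{j}+2^{i_{j}}y_{j})$, which is the right-hand side of \eqref{eq:conjecture2}, the sign $(-1)^{\sum_{j=1}^{r}N_{r}}$ in the statement being read as $(-1)^{\sum_{j=1}^{r}N_{j}}=(-1)^{N}$. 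Equivalently, one recognizes the displayed sum as $(-1)^{N}(\Delta_{c_{1}}\cdots\Delta_{c_{N}}g)(0)$ for $g(u)=u^{N}$, where $(\Delta_{c}h)(u)=h(u+c)-h(u)$, and uses that each $\Delta_{c}$ lowers the degree of a polynomial by exactly one while multiplying its leading coefficient by $c$ times the old degree, so that applying all $N$ of them to $u^{N}$ leaves the constant $N!\,c_{1}\cdots c_{N}$.

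I do not expect a serious obstacle here — the content is essentially this degree bookkeeping — but one structural point deserves emphasis. The argument works precisely because the exponent of the power in \eqref{eq:conjecture2} equals the total number $N$ of binary digits summed over (equivalently, the number of difference operators applied): if the exponent were strictly smaller the sum would vanish identically, and if it were strictly larger one would be left with a nonconstant polynomial in the auxiliary variables, so the stated identity sits exactly at the threshold where the answer is a nonzero constant. The one thing to verify carefully is the dictionary between the $N$ coefficients $c_{k}$ and the multiset $\{x_{j}+2^{i_{j}}y_{j}:1\le j\le r,\ 0\le i_{j}\le N_{j}-1\}$. Finally, the very same computation with $c_{k}$ taken to be $2^{i_{j}}y_{j}$ and $g(u)=(x+u)^{N}$ settles Conjecture~\ref{conj42}, and replacing base $2$ and the weight $(-1)^{s_{2}}$ by base $b$ and a suitable root-of-unity weight should produce the base-$b$ generalizations advertised in the introduction.
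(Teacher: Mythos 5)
Your proof is correct, and it takes a genuinely different route from the paper. The paper works one variable at a time: it defines $S_{N,l}=\sum_{n=0}^{b^N-1}\xi^{s_b(n)}(s_b(n)x+ny)^l$, splits off the leading base-$b$ digit to obtain the recurrence $S_{N,l}=\sum_{m=0}^{l-1}\binom{l}{m}(x+b^{N-1}y)^{l-m}a_{l-m}S_{N-1,m}$ with $a_l=\sum_{k=0}^{b-1}k^l\xi^k$, proves $S_{N,l}=0$ for $l<N$ by induction, solves the surviving one-term functional equation for $S_{N,N}$, and then lifts to general $r$ by a binomial expansion that again invokes the vanishing to isolate the single term $p=N_1$. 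You instead expand every $n_j$ into its binary digits simultaneously, which linearizes $s_2(n_j)x_j+n_jy_j$ into $\sum_{i_j}\varepsilon_{i_j}^{(j)}(x_j+2^{i_j}y_j)$ and collapses the entire $r$-fold sum into a single alternating sum over $\{0,1\}^N$ with $N=\sum_j N_j$ --- i.e.\ an $N$-fold finite difference of $u^N$ --- which the multinomial theorem evaluates in one stroke. Your computation is sound: the factor $\sum_{\varepsilon_k}(-1)^{\varepsilon_k}=0$ kills every term with some $m_k=0$, forcing $m_k\equiv 1$, and each surviving factor $\sum_{\varepsilon_k}(-1)^{\varepsilon_k}\varepsilon_k=-1$ supplies the sign $(-1)^N$ (the exponent $\sum_{j=1}^r N_r$ in the statement is indeed a typo for $\sum_{j=1}^r N_j$). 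Your approach is shorter, treats all $r$ variables at once, yields the vanishing for exponents $<N$ as an immediate byproduct rather than a separate induction, and, as you note, generalizes verbatim to base $b$: with digits $d_k\in\{0,\dots,b-1\}$ and a nontrivial $b$-th root of unity $\xi$ one needs only $\sum_{d=0}^{b-1}\xi^d=0$, and the surviving factors become $\sum_{d=0}^{b-1}d\,\xi^d=b/(\xi-1)$, reproducing the constant $(b/(\xi-1))^N$ of the paper's Theorem~\ref{thm12}. The one thing the paper's recurrence buys that your argument does not is the explicit intermediate family $S_{N,l}$ and its functional equation, which fits the style of the other digit-sum manipulations in the paper, but as a proof of the conjecture itself your route is the more economical and transparent one.
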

Moreover, we show new results about the following problem:
\begin{prob}\cite[Problem 4.5]{Ulas}
With $\mathbf{x}=\left( x_{1},\ldots ,x_{m} \right)$, study the following family of polynomials
\begin{equation}
\label{eq:pb3}
H_{m,N}\left( t,\mathbf{x} \right)=\sum_{i_{1}=0}^{2^{N}-1}\dots\sum_{i_{m}=0}^{2^{N}-1}\left(-1\right)^{s_{2}\left(\sum_{j=1}^{m}i_{j}\right)}\left(t+\sum_{j=1}^{m}i_{j}x_{j}\right)^{N}.
\end{equation}
\end{prob}
For what follows, the $\beta_{k}^{(N)}$ weights are complex constants which we specify later, and $\Delta_k$ is the forwards difference operator in the variable $k$ defined by the action
\[
\Delta_k f\left( x+k \right)= f\left( x+k+1 \right) - f\left( x+k \right).
\]
Our basic tool is the generalization to an arbitrary base $b\in \mathbb{N}$ of identity \eqref{eq:identity} below \cite{Wakhare}, which allows us to transform sums involving the sequence $(-1)^{s_{2}\left(n\right)}$ into sums that involve iterated finite differences. 
\begin{thm}\label{betathm}
Let the constants $\beta_k^{(N)}$ be defined by \eqref{betaweights} and let $\xi$ be a $b$-th root of unity. Then, for an arbitrary function $f$,
\[
\sum_{n=0}^{b^{N}-1}\xi^{s_{b}\left(n\right)}f\left(x+ny\right)=\left(-1\right)^{N}\sum_{k=0}^{b^{N}-N-1}\beta_{k}^{\left(N-1\right)}\Delta_{k}^{N}f\left(x+ky\right).
\]
\end{thm}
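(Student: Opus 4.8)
The plan is to recast the left-hand side as a polynomial in the shift operator, exploit the multiplicativity of $n\mapsto\xi^{s_{b}(n)}$ over base-$b$ digit blocks to factor that polynomial, and then extract $N$ copies of the difference operator. Introduce the shift operator $E$ acting on functions of one variable by $(Eg)(t)=g(t+y)$, so that applying $E^{n}$ to $f$ and evaluating at $x$ gives $f(x+ny)$ and, with $I$ the identity operator, applying $(E-I)^{N}E^{k}$ to $f$ and evaluating at $x$ gives $\Delta_{k}^{N}f(x+ky)$; since $E$ and $I$ commute, every expression below is an ordinary polynomial in $E$ over $\mathbb{C}$. Hence it suffices to prove the operator identity
\[
\sum_{n=0}^{b^{N}-1}\xi^{s_{b}(n)}E^{n}=(-1)^{N}(E-I)^{N}\sum_{k=0}^{b^{N}-N-1}\beta_{k}^{(N-1)}E^{k}
\]
and then apply both sides to $f$ and evaluate at $x$. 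Because every $n$ with $0\le n<b^{N}$ has a unique base-$b$ expansion $n=\sum_{j=0}^{N-1}d_{j}b^{j}$ with $s_{b}(n)=\sum_{j}d_{j}$, the left-hand operator factors as $\prod_{j=0}^{N-1}\bigl(\sum_{d=0}^{b-1}\xi^{d}E^{db^{j}}\bigr)$.

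Next I would analyse a single factor. Summing the geometric series and using $\xi^{b}=1$,
\[
\sum_{d=0}^{b-1}\xi^{d}E^{db^{j}}=\sum_{d=0}^{b-1}\bigl(\xi E^{b^{j}}\bigr)^{d}=\frac{\bigl(\xi E^{b^{j}}\bigr)^{b}-I}{\xi E^{b^{j}}-I}=\frac{E^{b^{j+1}}-I}{\xi E^{b^{j}}-I},
\]
the division being exact in $\mathbb{C}[E]$ (the roots of $\xi E^{b^{j}}-1$ are the $b^{j}$-th roots of $\bar{\xi}$, each a $b^{j+1}$-th root of unity since $\bar{\xi}^{\,b}=1$). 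The key observation is that this polynomial is divisible by $E-I$: evaluating $\sum_{d=0}^{b-1}\xi^{d}E^{db^{j}}$ at $E=1$ gives $\sum_{d=0}^{b-1}\xi^{d}=(\xi^{b}-1)/(\xi-1)=0$, which uses that $\xi$ is a \emph{nontrivial} $b$-th root of unity (the case $\xi=1$ is genuinely different, as the left-hand side is then not a multiple of $\Delta^{N}$). Thus each factor equals $(E-I)R_{j}(E)$ with $R_{j}\in\mathbb{C}[E]$ of degree $b^{j}(b-1)-1$, and multiplying over $0\le j\le N-1$,
\[
\sum_{n=0}^{b^{N}-1}\xi^{s_{b}(n)}E^{n}=(E-I)^{N}\,Q(E),\qquad Q(E):=\prod_{j=0}^{N-1}R_{j}(E),
\]
a polynomial of degree $\sum_{j=0}^{N-1}\bigl(b^{j}(b-1)-1\bigr)=b^{N}-N-1$, which matches the range of $k$ in the statement.

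To finish I would match coefficients: the definition \eqref{betaweights} of the $\beta$-weights amounts (after a short unwinding) to $Q(E)=(-1)^{N}\sum_{k=0}^{b^{N}-N-1}\beta_{k}^{(N-1)}E^{k}$, with $\beta_{k}^{(N-1)}$ depending on $b$ and $\xi$ in addition to $N$ and $k$. Substituting this into the displayed factorization, using $(E-I)^{N}=\Delta^{N}$, applying both sides to $f$, and recalling that $\Delta_{k}^{N}f(x+ky)$ equals $(E-I)^{N}E^{k}f$ evaluated at $x$ then yields the theorem. As a consistency check, for $b=2$ and $\xi=-1$ one has $\sum_{d=0}^{1}(-1)^{d}E^{d\cdot 2^{j}}=-(E^{2^{j}}-I)$, hence $R_{j}(E)=-(I+E+\dots+E^{2^{j}-1})$ and $Q(E)=(-1)^{N}\prod_{j=0}^{N-1}(I+E+\dots+E^{2^{j}-1})$, recovering identity \eqref{eq:identity}.

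The main obstacle is expected to be bookkeeping rather than ideas: one must verify carefully that $\bigl(E^{b^{j+1}}-I\bigr)/\bigl(\xi E^{b^{j}}-I\bigr)$ is a polynomial divisible by $E-I$ (the roots-of-unity argument above, with essential use of $\xi^{b}=1$ and $\xi\neq1$), and one must check that the sign $(-1)^{N}$ and the $\xi$-dependence produced by this factorization agree exactly with \eqref{betaweights}. An alternative route is induction on $N$, peeling off the leading digit block via $s_{b}(m+db^{N-1})=s_{b}(m)+d$ for $0\le m<b^{N-1}$ and $0\le d<b$, and then resumming $\sum_{d=0}^{b-1}\xi^{d}g(x+db^{N-1}y)$; this is workable but messier for general $\xi$, since for $\xi\neq\pm1$ that resummation is not a single difference operator and must itself be unwound into iterated differences.
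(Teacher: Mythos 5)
Your argument is correct, and in substance it is the same proof as the paper's, repackaged in the language of the shift operator. The paper's proof of this statement (given there as Theorem \ref{Thm7}) rests on exactly the polynomial identity you isolate, namely $\sum_{n=0}^{b^N-1}\xi^{s_b(n)}z^n=(1-z)^N\sum_{k=0}^{b^N-N-1}\beta_k^{(N-1)}z^k$ (equation \eqref{conv1}): the paper extracts from it the convolution \eqref{dual2}, substitutes that into the left-hand sum, and performs a double-sum interchange plus an index shift to reassemble $\Delta^N$; you instead keep the identity as a statement in $\mathbb{C}[E]$ and apply it to $f$ directly, which makes that final step immediate and dispenses with the interchange. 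The one step you leave implicit --- that the quotient $Q(E)$ obtained by dividing the digit-factorized left side by $(E-I)^N$ equals $(-1)^N\sum_k\beta_k^{(N-1)}E^k$ --- is precisely the telescoping computation carried out in the proof of Lemma \ref{betaconv}: multiplying the factor indexed by $l$ in \eqref{betaweights} by $(1-z)$ collapses it to $\sum_{d=0}^{b-1}\xi^d z^{db^l}$ (the coefficient of $z^{b\cdot b^l}$ vanishes because $\xi^b=1$), which is $-R_l$ in your notation, so the sign $(-1)^N$ comes out as you claim and the degrees match. Two minor observations: your derivation runs in the opposite direction (from the digit factorization of $\sum_n\xi^{s_b(n)}E^n$ toward the $\beta$ definition, rather than from the definition outward), which has the advantage of making the divisibility by $(E-I)^N$ conceptually transparent rather than a verification; and you are right to flag that $\xi\neq1$ is essential --- the paper states the hypothesis only as ``$\xi$ a $b$-th root of unity,'' but both its proof and yours use $\sum_{d=0}^{b-1}\xi^d=0$, which fails for $\xi=1$.
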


It is worth noting that by taking $f$ to be a polynomial in $x$ of degree $<N$ in this theorem, we recover the classical solution to the Prouhet-Tarry-Escott problem. We recall that the Tarry-Escott problem asks for $b$ equally sized sets $\{S_1,S_2,\ldots,S_b\}$ such that the sums $\sum_{s\in S_i}s^k$ are all equal, for some given values of $k$. Prouhet's contribution was to describe $S_i$ when we begin with the $b^N$ elements $\{0,1,\ldots,b^N-1\}$ and $k<N$, by defining $S_i = \{l: 0\leq l \leq  b^N -1, s_b(l) \equiv i \pmod b\}$. In fact, we can show the slightly more general result $$\sum_{n=0}^{b^N-1}\xi^{s_b(n)} f(ny) = 0$$ for \textit{any} polynomial of degree $< N$. 
This classical result follows from Theorem \ref{betathm} since the $\Delta_k$ operator is degree lowering in $x$, so that the right-hand side vanishes for polynomials of degree $<N$. Note that \textit{a priori} this does not give us a rule of partition any $b^N$ integers, since in general we can only interpolate these values with a degree $b^N-1$ polynomial. In the special case that we can instead write $b^N$ as the values of a polynomial of degree $<N$ sampled at points in an arithmetic progression, we obtain Prouhet's result. However, this also has an important limitation: given $b^N$ points, we only have a rule to form sets whose $N-1$-th powers agree.

By inspecting the $r=1$ case of our Theorem \ref{thm12}, a generalization of Conjecture \ref{conj44}, we discover we have shown the following generalization of the Prouhet-Tarry-Escott result:
\begin{thm} (Generalized Prouhet-Tarry-Escott) Let $f$ be any polynomial of degree $<N$. Then
$$\sum_{n=0}^{b^N-1}\xi^{s_b(n)} f\left(s_2(n)x+ny\right) = 0.$$
\end{thm}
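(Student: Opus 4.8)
The plan is to reduce the whole identity to one elementary fact about roots of unity: $1+\xi+\cdots+\xi^{b-1}=0$ for any $b$-th root of unity $\xi\neq 1$ (the value $\xi=1$ being tacitly excluded, since the identity is plainly false there---for constant $f$ the left side is $b^{N}$). By linearity in $f$ it suffices to treat a single monomial $f(t)=t^{m}$ with $0\le m\le N-1$. Expanding by the binomial theorem gives $f\bigl(s_b(n)x+ny\bigr)=\sum_{i=0}^{m}\binom{m}{i}x^{i}y^{m-i}\,s_b(n)^{i}\,n^{m-i}$, so, pulling out the free parameters $x$ and $y$, it is enough to prove
\[
\sum_{n=0}^{b^{N}-1}\xi^{s_b(n)}\,s_b(n)^{i}\,n^{l}=0\qquad\text{for all }i,l\ge 0\text{ with }i+l<N.
\]
(Here and below I read the argument of $f$ as $s_b(n)x+ny$, matching the $b$-th root of unity $\xi$; in the base-$b$ setting the $s_2$ written in the statement should be $s_b$.)

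The next step is to pass to base-$b$ digits. Writing $n=\sum_{j=0}^{N-1}d_{j}b^{j}$ with each $d_{j}\in\{0,1,\dots,b-1\}$, we have $s_b(n)=\sum_{j}d_{j}$, and the sum over $n\in\{0,\dots,b^{N}-1\}$ turns into an unrestricted sum over $(d_{0},\dots,d_{N-1})\in\{0,\dots,b-1\}^{N}$. Expanding $\bigl(\sum_{j}d_{j}\bigr)^{i}$ and $\bigl(\sum_{j}d_{j}b^{j}\bigr)^{l}$ by the multinomial theorem, the sum above becomes a finite linear combination---with coefficients built from multinomial numbers and powers of $b$, hence independent of the $d_{j}$---of expressions of the form
\[
\sum_{(d_{0},\dots,d_{N-1})\in\{0,\dots,b-1\}^{N}}\xi^{d_{0}+\cdots+d_{N-1}}\prod_{j=0}^{N-1}d_{j}^{e_{j}}=\prod_{j=0}^{N-1}\Bigl(\sum_{d=0}^{b-1}\xi^{d}d^{e_{j}}\Bigr),
\]
where in every term the exponents satisfy $e_{0}+\cdots+e_{N-1}=i+l<N$.

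Now comes the only real idea, a pigeonhole step: since the $N$ nonnegative integers $e_{0},\dots,e_{N-1}$ have sum strictly less than $N$, at least one of them, say $e_{j_{0}}$, is $0$. For that index the corresponding factor is $\sum_{d=0}^{b-1}\xi^{d}=0$, so the whole product vanishes; summing over all terms coming from the multinomial expansions, the left side is $0$, which is the displayed identity and hence the theorem. I expect the main ``obstacle'' to be nothing more than bookkeeping: one must keep track that the strict inequality $\deg f<N$ (equivalently $i+l<N$, not $i+l\le N$) is exactly what powers the pigeonhole, which is why the degree-$N$ case does \emph{not} vanish and instead yields the product $\prod_{i=0}^{N-1}(x+b^{i}y)$ featured in Conjecture~\ref{conj44}. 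Alternatively, the statement also drops out of the $r=1$ specialization of Theorem~\ref{thm12} once its explicit right-hand side is recognized as a multiple of the top-degree part of $f$; but the digitwise argument above is self-contained and uniform in $b$.
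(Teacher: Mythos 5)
Your proof is correct, but it is genuinely different from the one in the paper. The paper obtains this theorem as a byproduct of the proof of Theorem \ref{thm12}: there the authors set $S_{N,l}=\sum_{n=0}^{b^N-1}\xi^{s_b(n)}(s_b(n)x+ny)^l$, split the range of $n$ according to the leading base-$b$ digit to get the recurrence $S_{N,l}=\sum_{m=0}^{l-1}\binom{l}{m}(x+b^{N-1}y)^{l-m}a_{l-m}S_{N-1,m}$ with $a_j=\sum_{k=0}^{b-1}k^j\xi^k$, and then induct to get $S_{N,l}=0$ for $l<N$. You instead expand over \emph{all} $N$ digits at once: after reducing to $\sum_n \xi^{s_b(n)}s_b(n)^i n^l$ with $i+l<N$, the multinomial expansion factors each term as $\prod_{j}\bigl(\sum_{d=0}^{b-1}\xi^d d^{e_j}\bigr)$ with $\sum_j e_j=i+l<N$, and pigeonhole forces some $e_{j_0}=0$, killing the product via $\sum_{d=0}^{b-1}\xi^d=0$. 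Both arguments are valid; yours is more self-contained and is the natural generalization of the classical digit-factorization proof of Prouhet's theorem, and it makes transparent why $\deg f<N$ is the exact threshold. The paper's recurrence buys more: the same machinery, pushed one step further to $l=N$, produces the closed form $S_{N,N}=\frac{b^NN!}{(\xi-1)^N}\prod_{l=0}^{N-1}(x+b^ly)$ needed for Conjecture \ref{conj44}, whereas your argument as written only delivers the vanishing. Two of your side remarks are worth keeping: the exclusion of $\xi=1$ (implicit throughout the paper, since $a_0=0$ and the $\beta$ weights require $\xi\neq 1$) and the observation that the $s_2$ in the theorem statement should read $s_b$ to match the rest of the base-$b$ setting.
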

This result is unexpected because the digit sum function $s_2(n)$ is much less well behaved than the linear $ny$ term, and twists the function values we sample in an unexpected way. It gives us a deterministic noise term when sampling our points from an arithmetic progression, which can nevertheless be arbitrarily scaled since $y\in \mathbb{R}$ is free.

For example, taking the simple case $b=2, N=3, x=y=1$ gives us the partition $\{0,5,7,8\}\cup \{2,3,5,10\}$, where the sets are divided by the value of $s_2(n)\pmod 2$ and the values inside the sets are $n+s_2(n)$ for $0\leq n \leq 7$. We manually verify
\begin{align*}
0+5+7+8 =  2&0 = 2+3+5+10, \\
 0^2+5^2+7^2+8^2 =  1&38 = 2^2+3^2+5^2+10^2.
\end{align*}
Note that $5$ can be cancelled from both sides, so that we can recover the \textit{even smaller} six element partition $\{0,7,8\}\cup \{2,3,10\}$, which passes the classical barrier inherent to Prouhet's contruction. Taking $x, y \in \mathbb{Q}$ then generates even more exotic solutions. 

Though substantial generalizations of Prouhet's original result have been hard to come by, this work is interesting in that it provides ways to partition integer sets containing elements of multiplicity greater than one, so that we still retain nice equi-summability properties. Sorting elements of the same value into different sets also allows us to perform some cancellation, which then recovers unexpected new solutions. A computational search may allow us to discover new smaller solutions to the Prouhet-Tarry-Escott problem, by choosing $x$ and $y$ to maximize cancellation. This cancellation property is an unexpected new development in the study of the Tarry-Escott problem, which has spanned more than two centuries, and certainly merits independent study.



\section{Generalized Results}
To attack Conjecture \ref{conj42} we require a multiple version of a recent identity concerning sums over $(-1)^{s_2(n)}$. 
\begin{thm}\label{mainthm} \cite[Theorem 18]{Wakhare}
For integer $N$ and an arbitrary function $f$, we have
\begin{equation}
\sum_{n=0}^{2^{N}-1}\left(-1\right)^{s_{2}\left(n\right)}f\left(x+n\right)=\left(-1\right)^{N}\sum_{k=0}^{2^{N}-N-1}\alpha_{k}^{\left(N-1\right)}\Delta^{N}f\left(x+k\right)\label{eq:identity}
\end{equation}
\end{thm}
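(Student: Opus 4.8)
The plan is to prove \eqref{eq:identity} by induction on $N$, exploiting the self-similarity of $s_2$ under removal of the leading binary digit. For the base case $N=1$ the left-hand side is $f(x)-f(x+1)=-\Delta f(x)$, which agrees with the right-hand side precisely when $\alpha_0^{(0)}=1$; this fixes both the base case and the normalization of the weights.

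For the inductive step, write each $n\in\{0,\dots,2^{N}-1\}$ uniquely as $n=\varepsilon\,2^{N-1}+m$ with $\varepsilon\in\{0,1\}$ and $0\le m\le 2^{N-1}-1$, so that $s_2(n)=\varepsilon+s_2(m)$. Summing over $\varepsilon$ first,
\[
\sum_{n=0}^{2^{N}-1}(-1)^{s_2(n)}f(x+n)=\sum_{m=0}^{2^{N-1}-1}(-1)^{s_2(m)}\bigl[f(x+m)-f(x+m+2^{N-1})\bigr].
\]
Setting $g(u):=f(u)-f(u+2^{N-1})$, the inner summand is $(-1)^{s_2(m)}g(x+m)$, so the induction hypothesis applied to $g$ with parameter $N-1$ rewrites the right-hand side as $(-1)^{N-1}\sum_{k=0}^{2^{N-1}-N}\alpha_k^{(N-2)}\Delta^{N-1}g(x+k)$.

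It then remains to unfold $\Delta^{N-1}g$ back in terms of $f$. Since the telescoping identity $f(x+k+2^{N-1})-f(x+k)=\sum_{l=0}^{2^{N-1}-1}\Delta f(x+k+l)$ gives $\Delta^{N-1}g(x+k)=-\sum_{l=0}^{2^{N-1}-1}\Delta^{N}f(x+k+l)$, we obtain
\[
\sum_{n=0}^{2^{N}-1}(-1)^{s_2(n)}f(x+n)=(-1)^{N}\sum_{k=0}^{2^{N-1}-N}\sum_{l=0}^{2^{N-1}-1}\alpha_k^{(N-2)}\,\Delta^{N}f(x+k+l).
\]
Re-indexing by $j=k+l$, the index $j$ runs over $0\le j\le 2^{N}-N-1$, and the coefficient of $\Delta^{N}f(x+j)$ equals $\sum_{k}\alpha_k^{(N-2)}$ with $k$ ranging over $\max(0,\,j-2^{N-1}+1)\le k\le\min(2^{N-1}-N,\,j)$. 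By construction this is exactly the partial-sum recursion that defines $\alpha_j^{(N-1)}$ through \eqref{betaweights}, which closes the induction.

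The one genuinely delicate point is the bookkeeping: one must check that after re-indexing the summation ranges line up precisely with $0\le j\le 2^{N}-N-1$, and that the partial-sum recursion obtained above coincides with the definition of the weights in \eqref{betaweights}, including the degenerate cases near the endpoints where the $\min/\max$ truncations are active. Everything else is a formal manipulation of the commuting shift and difference operators. The identical argument, now splitting off the leading base-$b$ digit of $n$ and using $\sum_{\varepsilon=0}^{b-1}\xi^{\varepsilon}=0$ to supply the extra factor $\Delta_k$, proves the base-$b$ version stated in Theorem~\ref{betathm}; indeed \eqref{eq:identity} is its specialization to $b=2$, $\xi=-1$, $y=1$.
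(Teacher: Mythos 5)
Your induction is correct, and it is a genuinely different route from the one the paper takes. The paper does not prove Theorem \ref{mainthm} from scratch: it cites it and instead proves the generalization, Theorem \ref{Thm7}, by a direct (non-inductive) argument. There, the factorization $\sum_{n}\xi^{s_b(n)}z^n=(1-z)^N\sum_k\beta_k^{(N-1)}z^k$ of generating polynomials is read off coefficientwise (Lemma \ref{betaconv}) to give $\xi^{s_b(n)}=\sum_k\binom{N}{k}(-1)^k\beta_{n-k}^{(N-1)}$, and a single interchange of summation reassembles the binomial sum into $\Delta^N$. You instead peel off one binary digit per step: the split $n=\varepsilon 2^{N-1}+m$ converts the sum for $f$ into the sum for $g(u)=f(u)-f(u+2^{N-1})$ at level $N-1$, and the telescoping $h(x+k+2^{N-1})-h(x+k)=\sum_{l=0}^{2^{N-1}-1}\Delta h(x+k+l)$ supplies the extra power of $\Delta$. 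Both proofs ultimately rest on the same identity $\prod_{l=0}^{N-1}(1-z^{2^l})=(1-z)^N\prod_{l=0}^{N-1}\frac{1-z^{2^l}}{1-z}$; the paper uses it all at once, you use it one factor at a time. Your version is self-contained and elementary; the paper's version generalizes immediately to arbitrary base $b$, root of unity $\xi$, and step $y$ with no endpoint bookkeeping. (Your closing remark that the same digit-stripping argument gives Theorem \ref{betathm} is plausible but would need more care, since for $b>2$ the combination $\sum_{\varepsilon=0}^{b-1}\xi^\varepsilon f(u+\varepsilon b^{N-1}y)$ yields only one factor of $\Delta$ times a longer weighted sum, which is exactly where the extra factors in \eqref{betaweights} come from.)

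Two small points to tighten. First, the weights $\alpha_k^{(N)}$ are defined by the product \eqref{eq:gf}, not by \eqref{betaweights}, and they are not \emph{defined} by a partial-sum recursion; the recursion you need, namely that $\alpha_j^{(N-1)}$ equals $\sum_k\alpha_k^{(N-2)}$ over $\max(0,j-2^{N-1}+1)\le k\le\min(2^{N-1}-N,j)$, is the coefficient-of-$z^j$ statement of
\begin{equation*}
\sum_k\alpha_k^{(N-1)}z^k=\Bigl(\sum_k\alpha_k^{(N-2)}z^k\Bigr)\cdot\frac{1-z^{2^{N-1}}}{1-z},
\end{equation*}
which follows from \eqref{eq:gf} because the ratio of the two generating products is $\prod_{l=0}^{N-2}(1+z^{2^l})=\frac{1-z^{2^{N-1}}}{1-z}$. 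That one line closes the "delicate point" you flagged; the $\min/\max$ truncations then take care of themselves since both convolved sequences are supported on the stated ranges. Second, the base case should also record that $\alpha_0^{(0)}=1$ comes from the empty product in \eqref{eq:gf}, rather than being a normalization you are free to choose.
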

Here, we let $\Delta$ denote the forward finite difference operator with action $\Delta f(n) = f(n+1)-f(n)$. In what follows, we will heavily depend on $\alpha_k^{(N)}$, also known as sequence A131823 in the Online Encyclopedia of Integer Sequences (OEIS). For a fixed $N,$ we define $2^{N+1}-N-2$ coefficients $\alpha_{k}^{\left(N\right)}$ through the generating product 

\begin{equation}
\sum_{k=0}^{2^{N+1}-N-2}\alpha_{k}^{\left(N\right)}z^{k}=\prod_{l=0}^{N-1}\left(1+z^{2^{l}}\right)^{N-l}.\label{eq:gf}
\end{equation}

Alternatively, this sequence can be defined combinatorially as the number of integer points in the intersection of the parallepiped
$\left[0,1 \right] \times \left[0,3 \right] \times \cdots \times \left[0,2^N-1 \right]$ with the hyperplane $x_1+\dots +x_N=k$:
\[
\alpha_{k}^{\left(N\right)}=\#\left\{ 0\le k_{1}\le1,\ldots,0\le k_{N}\le2^{N}-1\thinspace\vert\thinspace k_{1}+\cdots+k_{N}=k\right\}.
\]

We construct a two-fold generalization of this theorem, first to arbitrary base, and then to scaled finite differences. To do this, we fix two positive integers $N$ and $b$, and let $\xi$ be a $b$-th root of unity. We can then define $b^{N+1}-N-2$ coefficients $\beta_{k}^{\left(N\right)}$ through the generating function
\begin{equation}
\sum_{k=0}^{b^{N+1}-N-2}\beta_{k}^{\left(N\right)}z^{k}=\prod_{l=0}^{N} \frac{1-z^{b^l}}{1-z}\left( 1 + (1+\xi)z^{b^l}+ (1+\xi+\xi^2 )z^{2 b^l} + \cdots + \left( \sum_{k=0}^{b-1} \xi^k \right)z^{(b-1) b^l}\right).   \label{betaweights}
\end{equation}
This representation can be significantly simplified -- each summand can be summed as a geometric series, or effectively telescoped, but this form most clearly highlights the polynomial nature of our product. Firstly, note that it in fact defines a polynomial in $z$, since $\frac{1-z^{b^l}}{1-z}$ is a finite geometric series of degree $b^l-1$. Additionally, the final partial sum over roots of unit vanishes since $  \sum_{k=0}^{b-1} \xi^k =0$, so that the degree of each term inside the product is $(b-1)b^l -1$, and the final polynomial does indeed have degree $b^{N+1}-N-2$. Next, the $\beta$ weights are in general complex valued and do not have a nice combinatorial interpretation; we lose the convenient property that, in base $b=2,$ the $\alpha$ weights count points on certain restricted hyperplanes. However, since none of our theorems depend on combinatorial properties of these coefficients, we can proceed freely. While this generating product does not give us any intuition for the $\beta$ weights, in practice we often only encounter sums such as $\sum_{k=0}^{b^{N+1}-N-2}\beta_n^{(N)}$ which can be easily derived from the generating product for the $\beta$ weights.

\begin{lem}
\label{lemma6}
The first two moments of the $\beta$ weights are
\begin{equation}
\label{sumbetak}
\sum_{k=0}^{b^{N}-N-1} \beta_{k}^{\left( N -1 \right)} = \frac{b^{\frac{N\left( N+1 \right)}{2}}}{\left( 1-\xi \right)^N}
\end{equation}
and
\begin{equation}
\label{sumkbetak}
\sum_{k=0}^{b^{N}-N-1} k\beta_{k}^{\left( N -1 \right)} = \frac{b^{\frac{N\left( N+1 \right)}{2}}}{\left( 1-\xi \right)^N} \left[\frac{1-b^N}{1-b}\left( \frac{b}{2} + \frac{\xi}{1-\xi} \right) - \frac{N}{2} \right].
\end{equation}

\end{lem}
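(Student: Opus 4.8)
The plan is to extract both moments directly from the generating function \eqref{betaweights} by evaluating the polynomial and its derivative at $z=1$. Write $P_N(z) = \sum_{k=0}^{b^{N+1}-N-2}\beta_k^{(N)}z^k = \prod_{l=0}^{N} Q_l(z)$, where
\[
Q_l(z) = \frac{1-z^{b^l}}{1-z}\left(1 + (1+\xi)z^{b^l} + (1+\xi+\xi^2)z^{2b^l} + \cdots + \Bigl(\sum_{k=0}^{b-1}\xi^k\Bigr)z^{(b-1)b^l}\right).
\]
Since $\sum_{k=0}^{b-1}\xi^k = 0$, the last bracketed factor is really a polynomial of degree $(b-2)b^l$, but it is cleaner to telescope: summation by parts (Abel summation) on the second factor gives $\sum_{j=0}^{b-1}\bigl(\sum_{k=0}^{j}\xi^k\bigr)z^{jb^l} = \frac{1}{1-\xi}\sum_{j=0}^{b-1}(1-\xi^{j+1})z^{jb^l} = \frac{1}{1-\xi}\left(\frac{1-z^{bb^l}}{1-z^{b^l}} - \xi\,\frac{1-(\xi z^{b^l})^b}{1-\xi z^{b^l}}\right)$, and using $\xi^b=1$ the second term simplifies to $\xi\,\frac{1-z^{b^{l+1}}}{1-\xi z^{b^l}}$. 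Hence
\[
Q_l(z) = \frac{1-z^{b^{l+1}}}{1-z}\cdot\frac{1}{1-\xi}\left(1 - \frac{\xi(1-z^{b^l})}{1-\xi z^{b^l}}\right) = \frac{1-z^{b^{l+1}}}{1-z}\cdot\frac{1-z^{b^l}}{1-\xi z^{b^l}}\cdot\frac{1}{1-\xi}\cdot\frac{1}{\,}\cdots
\]
— more carefully, $1 - \frac{\xi(1-z^{b^l})}{1-\xi z^{b^l}} = \frac{1-\xi z^{b^l}-\xi+\xi z^{b^l}}{1-\xi z^{b^l}} = \frac{1-\xi}{1-\xi z^{b^l}}$, so $Q_l(z) = \dfrac{1-z^{b^{l+1}}}{(1-z)(1-\xi z^{b^l})}$. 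This is the key simplification: $P_N(z) = \prod_{l=0}^{N}\dfrac{1-z^{b^{l+1}}}{(1-z)(1-\xi z^{b^l})}$.

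For the zeroth moment \eqref{sumbetak}, I evaluate $P_{N-1}(1)$. Each factor $\frac{1-z^{b^{l+1}}}{1-z}\to b^{l+1}$ as $z\to1$, and each $\frac{1}{1-\xi z^{b^l}}\to\frac{1}{1-\xi}$, so $P_{N-1}(1) = \prod_{l=0}^{N-1} b^{l+1}\cdot\frac{1}{1-\xi} = b^{1+2+\cdots+N}(1-\xi)^{-N} = b^{N(N+1)/2}(1-\xi)^{-N}$, which is exactly \eqref{sumbetak}. For the first moment \eqref{sumkbetak}, I use $\sum_k k\beta_k^{(N-1)} = P_{N-1}'(1) = P_{N-1}(1)\cdot\bigl(\log P_{N-1}\bigr)'(1)$. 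Writing $\log P_{N-1}(z) = \sum_{l=0}^{N-1}\bigl[\log(1-z^{b^{l+1}}) - \log(1-z) - \log(1-\xi z^{b^l})\bigr]$ and differentiating, then taking the limit $z\to1$: the term $\frac{d}{dz}\log(1-z^{m})$ at $z=1$ contributes $-\tfrac{m-1}{2}$ (from the expansion $\frac{-mz^{m-1}}{1-z^m} = \frac{-m}{m(1-z)}(1 + \tfrac{m-1}{2}(1-z)+\cdots)\to$ the finite part), so the pair $\log\frac{1-z^{b^{l+1}}}{1-z}$ contributes $-\tfrac{b^{l+1}-1}{2} + \tfrac{1-1}{2}\cdot(-1)$, i.e. $-\tfrac{b^{l+1}-1}{2}$; wait — more precisely $\frac{d}{dz}\log\frac{1-z^{b^{l+1}}}{1-z}\big|_{z=1} = \frac{b^{l+1}-1}{2}$ has the sign fixed by the standard limit $\frac{d}{dz}\log\frac{1-z^m}{1-z}\big|_{z=1} = \frac{m-1}{2}$. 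The term $-\log(1-\xi z^{b^l})$ contributes $\frac{\xi b^l}{1-\xi}$ at $z=1$. Summing over $l$: $\sum_{l=0}^{N-1}\frac{b^{l+1}-1}{2} = \frac{b}{2}\cdot\frac{b^N-1}{b-1} - \frac{N}{2}$ and $\sum_{l=0}^{N-1}\frac{\xi b^l}{1-\xi} = \frac{\xi}{1-\xi}\cdot\frac{b^N-1}{b-1}$. Adding and factoring out $\frac{b^N-1}{b-1} = \frac{1-b^N}{1-b}$ gives the bracket $\frac{1-b^N}{1-b}\bigl(\frac{b}{2}+\frac{\xi}{1-\xi}\bigr) - \frac{N}{2}$, and multiplying by $P_{N-1}(1)$ yields \eqref{sumkbetak}.

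The main obstacle is purely bookkeeping: getting the telescoping identity $Q_l(z) = \frac{1-z^{b^{l+1}}}{(1-z)(1-\xi z^{b^l})}$ right (which hinges on $\xi^b=1$), and then carefully extracting the constant term in the Laurent expansion at $z=1$ of each logarithmic derivative — the factors $\frac{1-z^{b^{l+1}}}{1-z}$ are regular and nonzero at $z=1$ so differentiating the logarithm is legitimate, and one just needs the standard expansion $\frac{1-z^m}{1-z} = m\bigl(1 + \tfrac{m-1}{2}(z-1) + O((z-1)^2)\bigr)$ near $z=1$. An alternative to logarithmic differentiation, if one wants to avoid it, is to substitute $z = 1+t$ and read off coefficients of $t^0$ and $t^1$ directly, or to use the product rule $P' = \sum_l Q_l' \prod_{m\neq l}Q_m$ and evaluate at $z=1$; both routes reach the same answer, and I would present whichever is shortest.
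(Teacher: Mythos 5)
Your proposal is correct and follows essentially the same route as the paper: both extract the zeroth moment by evaluating the generating product at $z=1$ (using $\xi^b=1$) and the first moment via the logarithmic derivative $F'(1)/F(1)$, summing the per-factor contributions $\frac{b^{l+1}-1}{2}+\frac{\xi b^l}{1-\xi}$ over $l$. Your preliminary telescoping of each factor to the closed form $\frac{1-z^{b^{l+1}}}{(1-z)(1-\xi z^{b^l})}$ is a tidy simplification the paper does not bother with (it differentiates the polynomial factors directly), but the computation and the result are the same.
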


\begin{proof}

Let 
\[
F\left(z\right)=\sum_{k=0}^{b^{N}-N-1}\beta_{k}^{\left(N-1\right)}z^{k}=\prod_{l=0}^{N-1}\left(\sum_{k=0}^{b^{l}-1}z^{k}\right)\left(\sum_{m=0}^{b-1}\frac{1-\xi^{k+1}}{1-\xi}z^{kb^{l}}\right)
\]
denote the generating function of the coefficients $\beta_{k}^{\left(N-1\right)}.$
We compute
\begin{align*}
F\left(1\right) & =\prod_{l=0}^{N-1}b^{l}\sum_{m=0}^{b-1}\frac{1-\xi^{k+1}}{1-\xi}=\prod_{l=0}^{N-1}b^{l}\left(\frac{b}{1-\xi}-\frac{\xi}{1-\xi}\frac{1-\xi^{b}}{1-\xi}\right) =\prod_{l=0}^{N-1}\frac{b^{l+1}}{1-\xi},
\end{align*}
where we have used the fact that $\xi^{b}=1.$ We then deduce
\[
\sum_{k=0}^{b^{N}-N-1}\beta_{k}^{\left(N-1\right)}=\frac{b^{\frac{N\left(N+1\right)}{2}}}{\left(1-\xi\right)^{N}}.
\]
Next we compute
\[
\frac{F'\left(z\right)}{F\left(z\right)}=\sum_{l=0}^{N-1}\frac{\frac{d}{dz}\left(\sum_{k=0}^{b^{l}-1}z^{k}\right)}{\sum_{k=0}^{b^{l}-1}z^{k}}+\sum_{l=0}^{N-1}\frac{\frac{d}{dz}\left(\sum_{m=0}^{b-1}\frac{1-\xi^{k+1}}{1-\xi}z^{kb^{l}}\right)}{\sum_{m=0}^{b-1}\frac{1-\xi^{k+1}}{1-\xi}z^{kb^{l}}}.
\]
The first term evaluated at $z=1$ is
\[
\sum_{l=0}^{N-1}\frac{\sum_{k=0}^{b^{l}-1}k}{\sum_{k=0}^{b^{l}-1}1}=\sum_{l=0}^{N-1}\frac{\frac{b^{l}\left(b^{l}-1\right)}{2}}{b^{l}}=\frac{1}{2}\sum_{l=0}^{N-1}\left(b^{l}-1\right)=\frac{1}{2}\left[\frac{1-b^{N}}{1-b}-N\right].
\]
The second term evaluated at $z=1$ is
\[
\sum_{l=0}^{N-1}\frac{\sum_{m=0}^{b-1}kb^{l}\frac{1-\xi^{k+1}}{1-\xi}}{\sum_{m=0}^{b-1}\frac{1-\xi^{k+1}}{1-\xi}}=\sum_{l=0}^{N-1}b^{l}\frac{\sum_{m=0}^{b-1}k\frac{1-\xi^{k+1}}{1-\xi}}{\sum_{m=0}^{b-1}\frac{1-\xi^{k+1}}{1-\xi}};
\]
the numerator and denominator are respectively evaluated as
\[
\sum_{m=0}^{b-1}k\frac{1-\xi^{k+1}}{1-\xi}=\frac{1}{1-\xi}\left[\frac{b\left(b-1\right)}{2}+b\frac{\xi}{1-\xi}\right]
\]
and, since $\xi$ is a $b-$th root of unity,
\[
\sum_{m=0}^{b-1}\frac{1-\xi^{k+1}}{1-\xi}=\frac{b}{1-\xi}.
\]
Therefore, the second term is 
\[
\left(\frac{b-1}{2}+\frac{\xi}{1-\xi}\right)\left(\frac{1-b^{N}}{1-b}\right)
\]
and we deduce
\begin{align*}
\sum_{k=0}^{b^{N}-N-1} k\beta_{k}^{\left( N -1 \right)} = F'\left(1\right) & =F\left(1\right)\left[\frac{1}{2}\frac{1-b^{N}}{1-b}-\frac{N}{2}+\frac{1-b^{N}}{1-b}\left(\frac{b-1}{2}+\frac{\xi}{1-\xi}\right)\right]\\
 & =\frac{b^{\frac{N\left(N+1\right)}{2}}}{\left(1-\xi\right)^{N}}\left[\frac{1-b^{N}}{1-b}\left(\frac{b}{2}+\frac{\xi}{1-\xi}\right)-\frac{N}{2}\right].
\end{align*}
\end{proof}

We also have the following simple convolution, which lies at the center of the arguments that follow.
\begin{lem}\label{betaconv}
We have the two convolutions
\begin{align}
\beta_n^{(N)} = \sum_{k=0}^n \binom{n-k+N}{N}\xi^{s_b(k)},\,\,  0\le n \le b^N-1, \label{dual1} \\
\xi^{s_b(n)}  = \sum_{k=0}^{n} \binom{N}{k}(-1)^k \beta_{n-k}^{(N-1)}, \,\,0\le n \le b^N-1 . \label{dual2}
\end{align}
\end{lem}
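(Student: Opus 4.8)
The plan is to deduce both convolutions from a single generating-function identity for the $\beta$ weights, namely
\[
\sum_{n\ge 0}\beta_{n}^{(N)}z^{n}=\frac{1}{(1-z)^{N+1}}\sum_{n=0}^{b^{N+1}-1}\xi^{s_{b}(n)}z^{n},
\]
after which \eqref{dual1} and \eqref{dual2} fall out as coefficient extractions.

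\emph{Step 1 (collapsing the generating product).} I would first rewrite each factor of \eqref{betaweights}. Using $\sum_{i=0}^{j}\xi^{i}=\tfrac{1-\xi^{j+1}}{1-\xi}$ and $\xi^{b}=1$, a short geometric-series computation gives, with $w=z^{b^{l}}$,
\[
\sum_{j=0}^{b-1}\Bigl(\sum_{i=0}^{j}\xi^{i}\Bigr)w^{j}=\frac{1-w^{b}}{1-\xi}\left(\frac{1}{1-w}-\frac{\xi}{1-\xi w}\right)=\frac{1-w^{b}}{(1-w)(1-\xi w)},
\]
so that the $l$-th factor of \eqref{betaweights} equals
\[
\frac{1-z^{b^{l}}}{1-z}\cdot\frac{1-z^{b^{l+1}}}{(1-z^{b^{l}})(1-\xi z^{b^{l}})}=\frac{1}{1-z}\cdot\frac{1-z^{b^{l+1}}}{1-\xi z^{b^{l}}}=\frac{1}{1-z}\sum_{j=0}^{b-1}\xi^{j}z^{jb^{l}},
\]
the key being the cancellation of the spurious factor $1-z^{b^{l}}$. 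Multiplying over $l=0,\dots,N$ and recognizing the standard base-$b$ digit generating function $\prod_{l=0}^{N}\sum_{j=0}^{b-1}\xi^{j}z^{jb^{l}}=\sum_{n=0}^{b^{N+1}-1}\xi^{s_{b}(n)}z^{n}$ then yields the displayed identity; the right-hand side is genuinely a polynomial because each of the $N+1$ factors vanishes at $z=1$ (here one uses $\xi\neq 1$, as already noted after \eqref{betaweights}), so comparing coefficients on the two sides is unambiguous.

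\emph{Step 2 (extracting coefficients).} For \eqref{dual1} I would read off $[z^{n}]$ using $[z^{m}](1-z)^{-(N+1)}=\binom{m+N}{N}$; the Cauchy product gives $\beta_{n}^{(N)}=\sum_{k=0}^{n}\binom{n-k+N}{N}\xi^{s_{b}(k)}$, in particular for $0\le n\le b^{N}-1$. For \eqref{dual2} I would instead invoke the displayed identity with $N$ replaced by $N-1$, clear the denominator to get $\bigl(\sum_{n\ge 0}\beta_{n}^{(N-1)}z^{n}\bigr)(1-z)^{N}=\sum_{n=0}^{b^{N}-1}\xi^{s_{b}(n)}z^{n}$, expand $(1-z)^{N}=\sum_{k}(-1)^{k}\binom{N}{k}z^{k}$, and take $[z^{n}]$; the terms with $k>N$ or $k>n$ drop out, leaving $\xi^{s_{b}(n)}=\sum_{k=0}^{n}(-1)^{k}\binom{N}{k}\beta_{n-k}^{(N-1)}$ for $0\le n\le b^{N}-1$.

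There is no real obstacle beyond bookkeeping; the one place demanding care is the telescoping in Step 1, specifically verifying that the factor $1-z^{b^{l}}$ cancels, since that is exactly what turns the forbidding product \eqref{betaweights} into $\tfrac{1}{(1-z)^{N+1}}\sum\xi^{s_{b}(n)}z^{n}$. It is also worth remarking that \eqref{dual2} is simply the inverse relation to \eqref{dual1} with $N$ lowered to $N-1$, reflecting the trivial identity $(1-z)^{N}(1-z)^{-N}=1$, so once the generating-function identity of Step 1 is in hand the second convolution is essentially forced by the first.
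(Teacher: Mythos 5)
Your proof is correct and follows essentially the same route as the paper: both establish the generating-function identity $(1-z)^{N+1}\sum_{k}\beta_{k}^{(N)}z^{k}=\sum_{k=0}^{b^{N+1}-1}\xi^{s_b(k)}z^{k}$ by collapsing the product \eqref{betaweights} and then obtain \eqref{dual1} and \eqref{dual2} by expanding $(1-z)^{-(N+1)}$ and $(1-z)^{N}$ respectively and comparing coefficients. Your Step 1 carries out the same telescoping as the paper, just via explicit rational-function cancellation rather than term-by-term differencing, and your bookkeeping of the product range $l=0,\dots,N$ versus the sum to $b^{N+1}-1$ is in fact more careful than the paper's displayed chain of equalities.
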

\begin{proof}
Both of these results follow from elementary product identities; we first need to show
\begin{equation} \label{conv1}
\sum_{k=0}^{b^{N}-1}\xi^{s_b(k)}z^{k} =   (1-z)^N\sum_{k=0}^{b^{N}-N-1}\beta_{k}^{\left(N-1\right)}z^{k}.
\end{equation}
This follows by manipulating the generating products for both series. Recall that 
\begin{align*}
(1-z)^{N+1}\sum_{k=0}^{b^{N+1}-N-2}\beta_{k}^{\left(N\right)}z^{k} &= (1-z)^{N+1}\prod_{l=0}^{N} \frac{1-z^{b^l}}{1-z}\left( 1 + (1+\xi)z^{b^l}+ (1+\xi+\xi^2 )z^{2 b^l} + \cdots + \left( \sum_{k=0}^{b-1} \xi^k \right)z^{(b-1) b^l}\right) \\
&=\prod_{l=0}^{N}\left( {1-z^{b^l}}\right)\left( 1 + (1+\xi)z^{b^l}+ (1+\xi+\xi^2 )z^{2 b^l} + \cdots + \left( \sum_{k=0}^{b-1} \xi^k \right)z^{(b-1) b^l}\right) \\
&= \prod_{l=0}^{N-1} \left(1+\xi z^{b^l} + \xi^2 z^{2\cdot b^l}+\cdots + \xi^{b-1} z^{(b-1)b^l}\right) \\
&=\sum_{k=0}^{b^{N}-1}\xi^{s_b(k)}z^{k}.
\end{align*}
The final step involved telesoping the summand. We can then expand $(1-z)^N$ and compare coefficients of $z^n$: 
\begin{align*}
\sum_{n=0}^{b^N-1}\xi^{s_b(n)}z^n &= (1-z)^N\sum_{k=0}^{b^{N}-N-1}\beta_{k}^{\left(N-1\right)}z^{k} \\
&=  \left( \sum_{k=0}^N \binom{N}{k} (-1)^k x^k \right) \sum_{k=0}^{b^{N}-N-1}\beta_{k}^{\left(N-1\right)}z^{k} \\
&= \sum_{n=0}^{b^N-1} z^n  \sum_{k=0}^{n} \binom{N}{k}(-1)^k \beta_{n-k}^{(N-1)}.
\end{align*}
The other convolution similarly follows from instead considering
$$\sum_{k=0}^{b^{N+1}-N-2}\beta_{k}^{\left(N\right)}z^{k} =  \frac{1}{(1-z)^{N+1}} \sum_{k=0}^{b^{N}-1}\xi_{k}^{s_b(k)}z^{k},$$
expanding $$\frac{1}{(1-z)^{N+1}} = \left(\sum_{k=0}^\infty \binom{k+N}{N}z^k\right),$$
and then comparing coefficients of $z^n$.  
\end{proof}

We can now prove one of our main results, a generalization of \cite[Theorem 18]{Wakhare} to arbitrary base and to asymmetric functional arguments.
\begin{thm}
\label{Thm7}
Let the sequence $\left \{\beta_k^{(N)} \right \}$ be defined by \eqref{betaweights} and let $\xi$ be a $b$-th root of unity. Then for an arbitrary function $f$,
\[
\sum_{n=0}^{b^{N}-1}\xi^{s_{b}\left(n\right)}f\left(x+ny\right)=\left(-1\right)^{N}\sum_{k=0}^{b^{N}-N-1}\beta_{k}^{\left(N-1\right)}\Delta_{k}^{N}f\left(x+ky\right),
\]
where $\Delta_{k}f\left(x+ky\right)=f\left(x+\left(k+1\right)y\right)-f\left(x+ky\right)$
is the forward finite difference operator in the variable $k$  \footnote{Theorem \ref{mainthm} is symmetric in $x$ and $k$ so the $\Delta$
operator can act on either of them. However, we now deal with functions
without this summetry, so that we need to specify which
variable the $\Delta$ operator acts on}.
\end{thm}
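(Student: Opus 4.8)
The plan is to derive Theorem~\ref{Thm7} directly from the convolution identity \eqref{dual2} of Lemma~\ref{betaconv}, bypassing any fresh generating-function manipulation. Concretely, I would substitute
\[
\xi^{s_b(n)} = \sum_{k=0}^{n}\binom{N}{k}(-1)^k\beta_{n-k}^{(N-1)}, \qquad 0\le n\le b^N-1,
\]
into the left-hand side $\sum_{n=0}^{b^N-1}\xi^{s_b(n)}f(x+ny)$. This is legitimate because \eqref{dual2} is valid on exactly the range over which $n$ runs. The result is a finite double sum over the triangular region $0\le k\le n\le b^N-1$.

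The next step is to interchange the order of summation and set $m=n-k$, which recasts the expression as $\sum_{m}\sum_{k}\binom{N}{k}(-1)^k\beta_m^{(N-1)}f\bigl(x+(m+k)y\bigr)$ over the region $k\ge 0$, $m\ge 0$, $m+k\le b^N-1$. The point to verify carefully here is that the ranges decouple: $\binom{N}{k}$ vanishes for $k>N$ and $\beta_m^{(N-1)}$ vanishes for $m>b^N-N-1$, so $k$ effectively runs from $0$ to $N$ and $m$ from $0$ to $b^N-N-1$; since $N+(b^N-N-1)=b^N-1$, the constraint $m+k\le b^N-1$ is then automatic and drops out. Pulling $\beta_m^{(N-1)}$ outside leaves the inner sum $\sum_{k=0}^{N}\binom{N}{k}(-1)^k f\bigl(x+(m+k)y\bigr)$.

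Finally I would identify this inner sum with an iterated finite difference. Expanding the $N$-fold forward difference in the variable on which it acts gives $\Delta_m^N f(x+my)=\sum_{k=0}^N\binom{N}{k}(-1)^{N-k}f\bigl(x+(m+k)y\bigr)$, so the inner sum equals $(-1)^N\Delta_m^N f(x+my)$, and summing against $\beta_m^{(N-1)}$ yields precisely the right-hand side, with the global factor $(-1)^N$ appearing as claimed. No hypothesis on $f$ is used, since every sum is finite.

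I expect the only genuine subtlety — the main obstacle such as it is — to be the interchange-and-reindex bookkeeping: confirming that the triangular summation region, after the change of variables $m=n-k$, really does factor as the product of the independent ranges $0\le k\le N$ and $0\le m\le b^N-N-1$, with no boundary terms created or lost. Everything else is a mechanical application of Lemma~\ref{betaconv} together with the standard binomial formula for $\Delta^N$.
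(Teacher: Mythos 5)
Your proposal is correct and is essentially identical to the paper's own proof: both substitute the convolution \eqref{dual2} into the left-hand side, swap the order of summation with the reindexing $m=n-k$, use the vanishing of $\binom{N}{k}$ for $k>N$ and of $\beta_m^{(N-1)}$ beyond $b^N-N-1$ to decouple the ranges, and identify the inner alternating binomial sum as $(-1)^N\Delta_m^N f(x+my)$. No substantive differences.
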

\begin{proof}
On a high level, we are expanding the finite difference operator as a sum, and then sfhiting it over to the $\beta$ weights while exploiting the convolution from Lemma \ref{betaconv}. We have the following direct argument, noting that for $l < 0$ we have $\beta^{(N)}_{l} = 0$:
\begin{align*}
\sum_{n=0}^{b^N-1} \xi^{s_b(n)} f(x+ny) &= \sum_{n=0}^{b^N-1} f(x+ny) \sum_{k=0}^{n} \binom{N}{k}(-1)^k \beta_{n-k}^{(N-1)} \\
&=  \sum_{k=0}^{b^N-1} \sum_{n=k}^{b^N-1} f(x+ny) \binom{N}{k}(-1)^k \beta_{n-k}^{(N-1)} \\
&=  \sum_{k=0}^{b^N-1} \sum_{n=0}^{b^N-k-1} f\left(x+(n+k)y\right) \binom{N}{k}(-1)^k \beta_{n}^{(N-1)} \\
&=  \sum_{k=0}^{N} \sum_{n=0}^{b^N-N-1}  f\left(x+(n+k)y\right)  \binom{N}{k}(-1)^k \beta_{n}^{(N-1)} \\
&=   \sum_{n=0}^{b^N-N-1}\beta_{n}^{(N-1)}\sum_{k=0}^{N}  f\left(x+(n+k)y\right)  \binom{N}{k}(-1)^k  \\
&=  (-1)^N \sum_{n=0}^{b^N-N-1} \beta_n^{(N-1)}\Delta^N_n f(x+ny).
\end{align*}
\end{proof}

We can compute some simple examples, which generalize \cite[p 115-116]{Allouche1} from base $2$ to an arbitary base, while also inserting a free variable $y$. 
\begin{cor}
We have the sums
\[
\sum_{n=0}^{b^{N}-1}\xi^{s_{b}\left(n\right)}\left(x+ny\right)^{N}=
\left(-1\right)^{N}N!y^{N}\frac{b^{\frac{N\left(N+1\right)}{2}}}{\left(1-\xi\right)^{N}}
\]
and
\[
\sum_{n=0}^{b^{N}-1}\xi^{s_{b}\left(n\right)}\left(x+ny\right)^{N+1}=
\left(-1\right)^{N}\left(N+1\right)!y^{N}\frac{b^{\frac{N\left(N+1\right)}{2}}}{\left(1-\xi\right)^{N}}\left[x+y\left(\frac{1-b^{N}}{1-b}\left(\frac{b}{2}+\frac{\xi}{1-\xi}\right)\right)\right].
\]
\end{cor}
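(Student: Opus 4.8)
The plan is to derive both identities directly from Theorem \ref{Thm7} by specializing $f$ to the monomial $f(t)=t^N$ (respectively $t^{N+1}$) and then using the moment computations from Lemma \ref{lemma6}. The key observation is that the operator $\Delta_k^N$ applied to a polynomial of degree $N$ in the variable $t=x+ky$ kills everything except the top-degree term, leaving a constant; applied to a polynomial of degree $N+1$ it leaves a linear function of $k$. So the right-hand side of Theorem \ref{Thm7} collapses to a linear combination of $\sum_k \beta_k^{(N-1)}$ and $\sum_k k\,\beta_k^{(N-1)}$, both of which are given explicitly by \eqref{sumbetak} and \eqref{sumkbetak}.

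Concretely, for the first identity I would compute $\Delta_k^N (x+ky)^N$. Writing $g(k)=(x+ky)^N$, this is a polynomial in $k$ of degree $N$ with leading coefficient $y^N$, and the $N$-th forward difference of $k^N$ is $N!$, so $\Delta_k^N (x+ky)^N = N!\,y^N$, independent of $k$. Plugging into Theorem \ref{Thm7} gives
\[
\sum_{n=0}^{b^N-1}\xi^{s_b(n)}(x+ny)^N = (-1)^N N!\,y^N \sum_{k=0}^{b^N-N-1}\beta_k^{(N-1)} = (-1)^N N!\,y^N \frac{b^{N(N+1)/2}}{(1-\xi)^N},
\]
using \eqref{sumbetak}. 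For the second identity, set $h(k)=(x+ky)^{N+1}$, a degree-$(N+1)$ polynomial in $k$ whose two top coefficients are $y^{N+1}$ (for $k^{N+1}$) and $(N+1)x y^N$ (for $k^N$). Since $\Delta_k^N$ annihilates all terms of degree $<N$ and sends $k^{N+1}\mapsto (N+1)!\,\bigl(k+\tfrac{N}{2}\bigr)$ and $k^N\mapsto N!$, a short computation gives $\Delta_k^N (x+ky)^{N+1} = (N+1)!\,y^{N+1}\bigl(k+\tfrac N2\bigr) + (N+1)!\,x y^N = (N+1)!\,y^N\bigl(x + y k + \tfrac{N}{2}y\bigr)$. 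Substituting into Theorem \ref{Thm7} and splitting the sum yields
\[
(-1)^N (N+1)!\,y^N\left[\Bigl(x+\tfrac N2 y\Bigr)\sum_{k}\beta_k^{(N-1)} + y\sum_{k}k\,\beta_k^{(N-1)}\right],
\]
and inserting \eqref{sumbetak} and \eqref{sumkbetak} and simplifying the bracket (the $-\tfrac N2$ in \eqref{sumkbetak} cancels the $+\tfrac N2 y$ term) produces exactly the claimed closed form.

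I do not expect any genuine obstacle here: the only nontrivial inputs — the two moments of the $\beta$ weights and the base-$b$ difference identity — are already established as Lemma \ref{lemma6} and Theorem \ref{Thm7}. The one point requiring a little care is the exact value of $\Delta_k^N$ acting on $k^{N+1}$; rather than invoke a Stirling-number formula I would verify the coefficient $\tfrac N2$ either by the generating-function identity $\Delta^N \binom{k}{N} = 1$ together with $\binom{k}{N+1}$, or simply by checking that $\Delta^N k^{N+1}$ is linear with the stated leading term and constant term, which is an immediate finite-difference computation. Everything else is bookkeeping, so the corollary follows in a few lines.
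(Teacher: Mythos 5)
Your proposal is correct and follows essentially the same route as the paper: specialize Theorem \ref{Thm7} to $f(t)=t^N$ and $f(t)=t^{N+1}$, use $\Delta_k^N(x+ky)^N=N!\,y^N$ and $\Delta_k^N(x+ky)^{N+1}=(N+1)!\,y^N\bigl[x+\tfrac{N}{2}y+ky\bigr]$, and finish with the two moments from Lemma \ref{lemma6} (the paper verifies the difference identities by induction on $N$ rather than your leading-coefficient argument, but that is immaterial). The cancellation of the $\tfrac{N}{2}y$ term that you note is exactly how the paper's ``elementary algebra'' step works out.
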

\begin{proof}
The first result follows by taking $f(x)=x^N$ and the second from $f(x)=x^{N+1}$ in Thm \ref{Thm7}. We also need the two following results that can be easily checked by induction on $N$:
\[
\Delta_{k}^{N}\left(x+ky\right)^{N}=N!y^{N}
\]
and
\[
\Delta_{k}^{N}\left(x+ky\right)^{N+1}=\left(N+1\right)!y^{N}\left[x+\frac{N}{2}y+ky\right].
\]
Elementary algebra using \eqref{sumbetak} and \eqref{sumkbetak} in Lemma \ref{lemma6} yields the results.
\end{proof}


A multiple sum version of Thm. \ref{Thm7}  can be stated as follows:

\begin{thm}\label{thm2}
A multiple summation version of 
Theorem \ref{Thm7}, where the $N_i$ are positive integers, is as follows: 
\begin{align}
\sum_{n_{1}=0}^{b^{N_{1}}-1}\dots\sum_{n_{r}=0}^{b^{N_{r}}-1}\xi^{\sum_{j=1}^{r}s_{b}\left(n_{j}\right)}f\left(x+\sum_{j=1}^{r}n_{j}y_{j}\right) & =\left(-1\right)^{\sum_{j=1}^{r}N_{j}}\sum_{k_{1}=0}^{b^{N_{1}}-N_{1}-1}\dots\sum_{k_{r}=0}^{b^{N_{r}}-N_{r}-1}\label{eq:tensor}\\
 &\times \left(\prod_{j=1}^{r}\beta_{k_{j}}^{\left(N_{j}-1\right)}\Delta_{k_{j}}^{N_{j}}\right)
 f\left(x+\sum_{j=1}^{r}k_{j}y_{j}\right).\nonumber 
\end{align}
\end{thm}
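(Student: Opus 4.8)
The plan is to prove Theorem \ref{thm2} by induction on $r$, using Theorem \ref{Thm7} as both the base case ($r=1$) and the engine of the inductive step. The key observation is that all the sums are finite, so we may freely interchange the order of summation, and that the operators $\Delta_{k_j}$ act on disjoint variables, hence commute with one another and with all the $n_i$-summations for $i\neq j$.

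First I would set up the induction. For $r=1$ the statement is precisely Theorem \ref{Thm7} applied to the function $f$ with step $y=y_1$. For the inductive step, assume the identity holds for $r-1$ summations. Starting from the left-hand side of \eqref{eq:tensor}, I would single out the innermost sum over $n_r$: writing $g(u) := f\!\left(u + \sum_{j=1}^{r-1} n_j y_j\right)$ and treating $x + \sum_{j=1}^{r-1} n_j y_j$ as a frozen ``base point,'' the sum $\sum_{n_r=0}^{b^{N_r}-1}\xi^{s_b(n_r)} g(x' + n_r y_r)$ is exactly the shape handled by Theorem \ref{Thm7}. Applying it converts that inner sum into $(-1)^{N_r}\sum_{k_r=0}^{b^{N_r}-N_r-1}\beta_{k_r}^{(N_r-1)}\Delta_{k_r}^{N_r} f\!\left(x + \sum_{j=1}^{r-1} n_j y_j + k_r y_r\right)$. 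Then I would pull the $\beta_{k_r}^{(N_r-1)}\Delta_{k_r}^{N_r}$ factor and the $k_r$-summation outside the remaining $r-1$ sums over $n_1,\dots,n_{r-1}$ (legitimate since $\Delta_{k_r}$ does not involve $n_1,\dots,n_{r-1}$ and everything is a finite sum), leaving an $(r-1)$-fold sum over $n_1,\dots,n_{r-1}$ of $\xi^{\sum_{j=1}^{r-1}s_b(n_j)}$ times $h\!\left(x + \sum_{j=1}^{r-1} n_j y_j\right)$, where $h(v) = \Delta_{k_r}^{N_r} f(v + k_r y_r)$. Applying the induction hypothesis to this $(r-1)$-fold sum produces $(-1)^{\sum_{j=1}^{r-1}N_j}\prod_{j=1}^{r-1}\beta_{k_j}^{(N_j-1)}\Delta_{k_j}^{N_j}$ acting on $h$, and reassembling all the pieces (the sign $(-1)^{N_r}\cdot(-1)^{\sum_{j=1}^{r-1}N_j} = (-1)^{\sum_{j=1}^r N_j}$, the product of $\beta$-weights, and the commuting product of difference operators) gives exactly the right-hand side of \eqref{eq:tensor}.

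The only mild subtlety, and the step I would be most careful about, is the bookkeeping in the interchange: after applying Theorem \ref{Thm7} to the $n_r$-sum one must justify moving the operator $\Delta_{k_r}^{N_r}$ (which acts on an argument also containing $\sum_{j<r} n_j y_j$) past the outer summations, and then recognizing that $h(v)=\Delta_{k_r}^{N_r}f(v+k_r y_r)$ is again ``an arbitrary function of one variable'' in the variable $v$ so that the induction hypothesis genuinely applies. Since $\Delta_{k_r}$ differentiates with respect to $k_r$ only — a variable disjoint from all $n_j$ and all $k_j$ with $j<r$ — this is purely formal, but it is worth stating explicitly that finite-difference operators in distinct variables commute and that finiteness of all index sets licenses every rearrangement. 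No convergence or analytic input is needed; the whole argument is a finite algebraic manipulation layered on top of Theorem \ref{Thm7}.
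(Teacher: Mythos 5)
Your proposal is correct and is essentially the same argument as the paper's: the paper also proceeds by iterating Theorem \ref{Thm7} one index at a time (explicitly carrying out the passage from $r=1$ to $r=2$ and then repeating), relying on the same commutation of the $\Delta_{k_j}$ operators with the remaining finite sums. Your version merely makes the induction and the interchange-of-summation bookkeeping more explicit.
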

\begin{proof}
Start with Theorem \ref{Thm7}
\[
\sum_{n_{1}=0}^{b^{N_{1}}-1}\xi^{s_{b}\left(n_{1}\right)}f\left(x+n_{1}y_{1}\right)=\left(-1\right)^{N_{1}}\sum_{k_{1}=0}^{b^{N_{1}}-N_{1}-1}\beta_{k_{1}}^{\left(N_{1}-1\right)}\Delta^{N_{1}}_{k_{1}}f\left(x+k_{1}y_{1}\right),
\]
replace $x$ with $x+n_{2}y_{2}$ on both sides, multiply by $\xi^{s_{b}\left(n_{2}\right)}$,
and sum over $0\le n_{2}\le b^{N_{2}-1}$ to obtain
\begin{align*}
\sum_{n_{1}=0}^{b^{N_{1}}-1}\sum_{n_{2}=0}^{b^{N_{2}}-1}\xi^{s_{b}\left(n_{1}\right)+s_{2}\left(n_{2}\right)}f\left(x+n_{1}y_{1}+n_{2}y_{2}\right) & =\left(-1\right)^{N_{1}}\sum_{k_{1}=0}^{b^{N_{1}}-N_{1}-1}\sum_{n_{2}=0}^{b^{N_{2}}-1}\xi^{s_{b}\left(n_{2}\right)}\beta_{k_{1}}^{\left(N_{1}-1\right)}\Delta^{N_{1}}_{k_{1}}f\left(x+k_{1}y_{1}+n_{2}y_{2}\right)\\
 & =\left(-1\right)^{N_{1}}\sum_{k_{1}=0}^{b^{N_{1}}-N_{1}-1}\beta_{k_{1}}^{\left(N_{1}-1\right)}\Delta^{N_{1}}_{k_{1}}\left[\sum_{n_{2}=0}^{b^{N_{2}}-1}\xi^{s_{b}\left(n_{2}\right)}f\left(x+k_{1}y_{1}+n_{2}y_{2}\right)\right].
\end{align*}
The inner sum can be computed by Thm. \ref{Thm7} as
\[
\sum_{n_{2}=0}^{b^{N_{2}}-1}\left(-1\right)^{s_{b}\left(n_{2}\right)}f\left(x+k_{1}y_{1}+n_{2}y_{2}\right)=\left(-1\right)^{N_{2}}\sum_{k_{2}=0}^{b^{N_{2}}-N_{2}-1}\beta_{k_{2}}^{\left(N_{2}-1\right)}\Delta^{N_{2}}_{k_{2}}f\left(x+k_{1}y_{1}+k_{2}y_{2}\right),
\]
so that
\begin{align*}
\sum_{n_{1}=0}^{b^{N_{1}}-1}\sum_{n_{2}=0}^{b^{N_{2}}-1}\xi^{s_{b}\left(n_{1}\right)+s_{b}\left(n_{2}\right)}f\left(x+n_{1}y_{1}+n_{2}y_{2}\right) & =\left(-1\right)^{N_{1}+N_{2}}\sum_{k_{1}=0}^{b^{N_{1}}-N_{1}-1}\sum_{k_{2}=0}^{b^{N_{2}}-N_{2}-1}\beta_{k_{1}}^{\left(N_{1}-1\right)}\beta_{k_{2}}^{\left(N_{2}-1\right)}\\
 & \times \Delta^{N_{1}}_{k_{1}}\Delta^{N_{2}}_{k_{2}}f\left(x+k_{1}y_{1}+k_{2}y_{2}\right).
\end{align*}
Repeating this operation $r-1$ times gives the result.
\end{proof}

\section{Conjectures}
\subsection{Conjecture 1}
We now use result \eqref{eq:tensor} to prove 
Conjecture \ref{conj42}.
\begin{prop}
Conjecture \ref{conj42} is true. In fact, for arbitrary base $b$ we have 
\[
\sum_{n_{1}=0}^{b^{N_{1}}-1}\dots\sum_{n_{r}=0}^{b^{N_{r}}-1}\xi^{\sum_{j=1}^{r}s_{b}\left(n_{j}\right)}\left(x+\sum_{j=1}^{r}n_{j}y_{j}\right)^{\sum_{j=1}^{r}N_{j}}=\left( \frac{1}{\xi-1}\right)^{\sum_{j=1}^{r}N_{r}}b^{\sum_{j=1}^{r}\frac{N_{j}\left(N_{j}+1\right)}{2}}\left(\prod_{j=1}^{r}y_{j}^{N_{j}}\right)\left(\sum_{j=1}^{r}N_{j}\right)!.
\]
\end{prop}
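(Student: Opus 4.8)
The plan is to apply the multiple-sum identity \eqref{eq:tensor} from Theorem \ref{thm2} with the specific choice $f(u) = u^{M}$, where $M = \sum_{j=1}^{r} N_j$. After this substitution, the left-hand side of \eqref{eq:tensor} is exactly the sum we wish to evaluate, so everything reduces to computing the right-hand side explicitly. The right-hand side is a product of iterated finite-difference operators $\prod_{j=1}^{r} \Delta_{k_j}^{N_j}$ applied to the monomial $\bigl(x + \sum_{j} k_j y_j\bigr)^{M}$, followed by a weighted multiple sum against $\prod_j \beta_{k_j}^{(N_j-1)}$.

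First I would establish the key differentiation fact: applying $\Delta_{k_j}^{N_j}$ in the variable $k_j$ to $\bigl(x+\sum_i k_i y_i\bigr)^{M}$ behaves, up to the leading term, like $N_j!\,y_j^{N_j}$ times the operation of dropping the degree in $k_j$ by $N_j$. Since $M = \sum_j N_j$ is precisely the total number of difference operators applied, the composite $\prod_{j=1}^r \Delta_{k_j}^{N_j}$ annihilates every term of the multinomial expansion of $\bigl(x+\sum_j k_j y_j\bigr)^{M}$ except the single term proportional to $\prod_j (k_j y_j)^{N_j}$ (all lower-order terms in some $k_j$ vanish, and there is no room for higher-order terms since the total degree is exactly $M$). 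A short induction — or a direct multinomial computation — shows that
\[
\prod_{j=1}^{r}\Delta_{k_j}^{N_j}\Bigl(x+\sum_{j=1}^{r}k_j y_j\Bigr)^{M} = \Bigl(\sum_{j=1}^{r}N_j\Bigr)!\,\prod_{j=1}^{r}y_j^{N_j},
\]
a constant independent of $x$ and of all the $k_j$. This is the analogue of the identity $\Delta_k^N (x+ky)^N = N!\,y^N$ already used in the Corollary above, and it is the one genuinely computational step; I expect it to be the main obstacle, though it is routine once one sets up the multinomial bookkeeping carefully (one must check that no cross terms survive).

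Once the difference operator produces this constant, the multiple sum on the right-hand side of \eqref{eq:tensor} factors completely:
\[
(-1)^{M}\Bigl(\sum_{j=1}^r N_j\Bigr)!\,\Bigl(\prod_{j=1}^r y_j^{N_j}\Bigr)\prod_{j=1}^{r}\Bigl(\sum_{k_j=0}^{b^{N_j}-N_j-1}\beta_{k_j}^{(N_j-1)}\Bigr).
\]
Now I invoke Lemma \ref{lemma6}, specifically the first-moment formula \eqref{sumbetak}, which gives $\sum_{k=0}^{b^{N_j}-N_j-1}\beta_{k}^{(N_j-1)} = b^{N_j(N_j+1)/2}/(1-\xi)^{N_j}$ for each $j$. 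Multiplying these together contributes $b^{\sum_j N_j(N_j+1)/2}$ and $(1-\xi)^{-\sum_j N_j} = (1-\xi)^{-M}$. Absorbing the sign $(-1)^{M}$ into $(1-\xi)^{-M}$ to get $(\xi-1)^{-M}$ yields exactly the claimed closed form, and I would remark that the statement of Conjecture \ref{conj42} is recovered by setting $b=2$, $\xi=-1$ (so $\xi-1=-2$, $(\xi-1)^{-M} = (-1)^M 2^{-M}$), and checking that $2^{\sum_j N_j(N_j+1)/2}\cdot 2^{-\sum_j N_j} = 2^{\sum_j N_j(N_j-1)/2}$ matches; the sign $(-1)^{M}$ there coincides with the $(-1)^{\sum_j N_r}$ in Byszewski--Ulas's formulation. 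This completes the proof.
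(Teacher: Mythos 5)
Your proposal is correct and follows essentially the same route as the paper: apply Theorem \ref{thm2} with $f(u)=u^{\sum_j N_j}$, observe that the composite operator $\prod_j \Delta_{k_j}^{N_j}$ reduces the degree-$\bigl(\sum_j N_j\bigr)$ monomial to the constant $\bigl(\sum_j N_j\bigr)!\prod_j y_j^{N_j}$, factor the resulting multiple sum over the $\beta$ weights, and evaluate each factor via \eqref{sumbetak}. The only difference is that you spell out the multinomial bookkeeping for the difference operators and the $b=2$, $\xi=-1$ specialization more explicitly than the paper does; both of these checks are correct.
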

\begin{proof}
Consider Theorem \ref{thm2}: we note that each $\Delta_{k_{j}}$ operator 
reduces the degree of the polynomial $f$ by $1,$ so choosing
\[
f\left(x\right)=x^{\sum_{j=1}^{r}N_{j}}
\]
gives a 
right-hand side that does not depend on $x$. Under this choice of $f$, we obtain
\begin{align*}
\sum_{n_{1}=0}^{b^{N_{1}}-1}\dots\sum_{n_{r}=0}^{b^{N_{r}}-1} &\xi^{\sum_{j=1}^{r}s_{b}\left(n_{j}\right)}f\left(x+\sum_{j=1}^{r}n_{j}y_{j}\right) =\\
&\left(\sum_{j=1}^{r}N_{j}\right)!\left(-1\right)^{\sum_{j=1}^{r}N_{j}}\sum_{k_{1}=0}^{b^{N_{1}}-N_{1}-1}\dots\sum_{k_{r}=0}^{b^{N_{r}}-N_{r}-1}\left(\prod_{j=1}^{r}\beta_{k_{j}}^{\left(N_{j}-1\right)}\right)\prod_{j=1}^{r}y_{j}^{N_{j}}.
\end{align*}
In Lemma \ref{lemma6} we computed the simple sum
$$
\sum_{k=0}^{b^{N}-N-1} \beta_{k}^{\left( N -1 \right)} = \frac{b^{\frac{N\left( N+1 \right)}{2}}}{\left( 1-\xi \right)^N}.
$$
Therefore, we obtain the $r-$fold sum
\begin{align*}
\sum_{k_{1}=0}^{b^{N_{1}}-N_{1}-1}\dots\sum_{k_{r}=0}^{b^{N_{r}}-N_{r}-1}\prod_{j=1}^{r}\beta_{k_{j}}^{\left(N_{j}-1\right)} & =\prod_{j=1}^{r}\sum_{k_{j}=0}^{b^{N_{j}}-N_{j}-1}\beta_{k_{j}}^{\left(N_{j}-1\right)}
 =\prod_{j=1}^{r} \frac{b^{\frac{N_j\left( N_j+1 \right)}{2}}}{\left( 1-\xi \right)^{N_j}},
\end{align*}
which completes the proof.
\end{proof}

\subsection{Conjecture 2}
We now attack the second conjecture, using different methods. In \cite{Ulas},  the closed form \eqref{eq:conjecture2} is conjectured for $b=2$, and  a partial solution is formulated for higher $b$. We now prove the following general case.

\begin{thm}\label{thm12}
For $r\ge1$ and $N_{1},\dots N_{r}$ positive integers, we have
\[
\sum_{n_{1}=0}^{b^{N_{1}}-1}\dots\sum_{n_{r}=0}^{b^{N_{r}}-1}\xi^{\sum_{j=1}^{r}s_{b}\left(n_{j}\right)}\left(\sum_{j=1}^{r}s_{b}\left( n_j \right)x_j + n_{j}y_{j}\right)^{\sum_{j=1}^{r}N_{j}}
=\left(\frac{b}{\xi-1}\right)^{\sum_{j=1}^{r}N_{r}}\left(\sum_{j=1}^{r}N_{j}\right)! \prod_{j=1}^r \prod_{i_j=0}^{N_j-1}(x_j+b^{i_j}y_j).
\]
As a consequence, Conjecture 2 is true.
\end{thm}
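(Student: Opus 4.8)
The plan is to sidestep the $\beta$-weight machinery and prove Theorem~\ref{thm12} by a direct base-$b$ digit expansion; throughout, $\xi$ is a $b$-th root of unity with $\xi\neq 1$ (as is forced by the factor $b/(\xi-1)$ on the right). First I would expand each summation index in base $b$, writing $n_j=\sum_{l=0}^{N_j-1}d_{j,l}b^l$ with digits $d_{j,l}\in\{0,1,\dots,b-1\}$. This is a bijection between $\{0,\dots,b^{N_j}-1\}$ and the digit tuples, under which $s_b(n_j)=\sum_{l=0}^{N_j-1}d_{j,l}$ and $n_j=\sum_{l=0}^{N_j-1}d_{j,l}b^l$, so the argument of the power collapses to a single linear form in the digits,
\[
\sum_{j=1}^r\bigl(s_b(n_j)x_j+n_jy_j\bigr)=\sum_{j=1}^r\sum_{l=0}^{N_j-1}d_{j,l}\,(x_j+b^ly_j).
\]
Writing $M=\sum_{j=1}^r N_j$ and $z_{j,l}=x_j+b^ly_j$, the left-hand side of the theorem becomes $\sum_{\{d_{j,l}\}}\xi^{\sum_{j,l}d_{j,l}}\bigl(\sum_{j,l}d_{j,l}z_{j,l}\bigr)^{M}$, the outer sum ranging over all $M$-tuples of digits $(d_{j,l})_{1\le j\le r,\,0\le l\le N_j-1}$.

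Next I would expand the $M$-th power by the multinomial theorem and interchange the (finite) sums, so that the expression factors as
\[
\sum_{\substack{(a_{j,l})\ge 0\\ \sum_{j,l}a_{j,l}=M}}\binom{M}{(a_{j,l})}\Bigl(\prod_{j,l}z_{j,l}^{a_{j,l}}\Bigr)\prod_{j,l}\Bigl(\sum_{d=0}^{b-1}\xi^d d^{a_{j,l}}\Bigr).
\]
The crux of the argument is the vanishing observation: the index set $\{(j,l)\}$ has \emph{exactly} $M$ elements, the exponents $a_{j,l}$ are nonnegative integers summing to $M$, and the inner factor $\sum_{d=0}^{b-1}\xi^d d^{a}$ equals $\sum_{d=0}^{b-1}\xi^d=(\xi^b-1)/(\xi-1)=0$ whenever $a=0$. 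Hence every multi-index with some $a_{j,l}=0$ contributes nothing, and the only surviving multi-index is $a_{j,l}\equiv 1$. For that term the multinomial coefficient is $M!$, the monomial in the $z$'s is $\prod_{j,l}z_{j,l}=\prod_{j=1}^r\prod_{l=0}^{N_j-1}(x_j+b^ly_j)$, and each inner factor is $\sum_{d=0}^{b-1}d\,\xi^d$.

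It then remains to record the elementary evaluation $\sum_{d=0}^{b-1}d\,\xi^d=b/(\xi-1)$: differentiating $\sum_{d=0}^{b-1}\xi^d=(\xi^b-1)/(\xi-1)$ and multiplying by $\xi$ gives $\sum_{d=0}^{b-1}d\xi^d=\xi\bigl(b\xi^{b-1}(\xi-1)-(\xi^b-1)\bigr)/(\xi-1)^2$, which simplifies to $b/(\xi-1)$ once one uses $\xi^b=1$. Raising this to the $M$-th power and assembling yields exactly $\bigl(b/(\xi-1)\bigr)^{M}M!\prod_{j=1}^r\prod_{l=0}^{N_j-1}(x_j+b^ly_j)$, the claimed right-hand side. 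Finally, Conjecture~\ref{conj44} is the special case $b=2$, $\xi=-1$: there $b/(\xi-1)=-1$, so $\bigl(b/(\xi-1)\bigr)^{M}=(-1)^{\sum_j N_j}$ and $\prod_{l=0}^{N_j-1}(x_j+2^ly_j)$ matches \eqref{eq:conjecture2}.

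I do not foresee a serious obstacle with this route; the only points requiring care are the bookkeeping of the digit bijection — making sure the number of digit variables is exactly $M$, which is precisely what forces the all-ones multi-index and kills everything else — and the small computation of $\sum_{d=0}^{b-1}d\,\xi^d$. (One could instead try to run an $r$-fold induction in the style of Theorem~\ref{thm2}, but the presence of the $s_b(n_j)x_j$ terms inside the argument breaks the single-variable template there, so the digit expansion above seems the most transparent path.)
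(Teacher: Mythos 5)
Your proof is correct, and it takes a genuinely different route from the paper's. The paper proves the $r=1$ case by peeling off the leading base-$b$ digit to get a recurrence $S_{N,l}=\sum_{m}\binom{l}{m}(x+b^{N-1}y)^{l-m}a_{l-m}S_{N-1,m}$ with $a_l=\sum_{k=0}^{b-1}k^l\xi^k$, uses $a_0=0$ to show $S_{N,l}=0$ for $l<N$ by induction on $N$, solves the resulting one-term functional equation for $S_{N,N}$ using $a_1=b/(\xi-1)$, and then runs a second induction on $r$ via the binomial theorem and the vanishing result. Your argument expands \emph{all} digits of \emph{all} the $n_j$ at once, so that the argument of the power becomes a single linear form $\sum_{j,l}d_{j,l}(x_j+b^ly_j)$ in exactly $M=\sum_j N_j$ independent digit variables; the multinomial expansion then factors completely, and the same two evaluations ($\sum_d\xi^d=0$ and $\sum_d d\xi^d=b/(\xi-1)$, which you compute correctly) force the all-ones multi-index by a counting argument. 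The underlying engine is identical — both proofs live or die on $a_0=0$ and $a_1=b/(\xi-1)$ — but your one-shot organization avoids the double induction entirely, handles all $r$ uniformly, and makes the paper's intermediate result $S_{N,l}=0$ for $l<N$ (and hence the generalized Prouhet--Tarry--Escott statement) an immediate byproduct: if the exponent is less than $M$, every multi-index has some $a_{j,l}=0$ and the whole sum vanishes. The only conventions worth stating explicitly are $\xi\neq 1$ (implicit in the theorem) and $0^0=1$ in the factor $\sum_d\xi^dd^0$, both of which you handle; I see no gap.
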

\begin{proof}
As before, we attack the case $r=1$ first, and then use this to derive the result for arbitrary $r$. As an intermediate step, we consider the more general sum
\begin{equation}
S_{N,l} := \sum_{n=0}^{b^N-1} \xi^{s_b(n)}\left(s_b(n)x+ny\right)^l.
\end{equation}
This is very different from the previous conjecture; the mixing of digit based $s_b(n)$ and linear $n$ terms means that we cannot state a general functional result as before; instead, the methods developed to address this sum do not appear to generalize easily. Our approach is to derive a recurrence for $S_{N,l}$, from which we can show it is zero for $l<N$. We can then use this to kill most of the terms in the recurrence for $S_{N,N}$, leading to a simple functional equation which we can explicitly solve.

We begin by splitting the domain of summation into $b$ blocks of size $b^{N-1}$, each of which corresponds to a possibility for the first digit of $n$. We then exploit the recurrence $s_b(n+ kb^{N-1}) =k +s_b(n),$ which holds for all $n< b^{N-1}$ since this corresponds to adjoining the digit $k$ to the front of a shorter digit string. Accordingly, we have
\begin{align*}
S_{N,l} &=  \sum_{n=0}^{b^N-1} \xi^{s_b(n)}\left(s_b(n)x+ny\right)^l \\
& = \sum_{n=0}^{b^{N-1}-1} \sum_{k=0}^{b-1}\xi^{s_b(n+kb^{N-1})}\left(s_b(n+kb^{N-1})x+(n+kb^{N-1})y\right)^l \\
& = \sum_{n=0}^{b^{N-1}-1} \sum_{k=0}^{b-1}\xi^{s_b(n)+k}\left(s_b(n)x+ny + k(x+b^{N-1}y)\right)^l.
\end{align*}
We want to write this as a linear combination of the sums $S_{N-1,l}$, so we are forced to expand the inner term using the binomial theorem. The rest of the argument is a straightforward interchange of the order of summation. For what follows, we use the sequence of constants 
\begin{equation}
a_l = \sum_{k=0}^{b-1} k^l \xi^k.
\end{equation}
Note the special values $a_0 = 0$ and $a_1 = \frac{b}{\xi-1}$. 
Proceeding in this way, we obtain 
\begin{align*}
S_{N,l}& = \sum_{n=0}^{b^{N-1}-1} \sum_{k=0}^{b-1}\xi^{s_b(n)+k}\left(s_b(n)x+ny + k(x+b^{N-1}y)\right)^l \\
&= \sum_{n=0}^{b^{N-1}-1} \sum_{k=0}^{b-1} \xi^{s_b(n)+k} \sum_{m=0}^l \binom{l}{m}\left(s_b(n)x+ny\right)^m \left(x+b^{N-1}y\right)^{l-m} k^{l-m}\\
&= \sum_{m=0}^l \binom{l}{m}\left(x+b^{N-1}y\right)^{l-m} \sum_{n=0}^{b^{N-1}-1}\xi^{s_b(n)}\left(s_b(n)x+ny\right)^m \sum_{k=0}^{b-1} \xi^{k}  k^{l-m} \\
&=\sum_{m=0}^l \binom{l}{m}\left(x+b^{N-1}y\right)^{l-m} a_{l-m}S_{N-1,m}.
\end{align*}
Hence, we have the recurrence
\begin{equation}\label{sRecurrence}
S_{N,l}=\sum_{m=0}^{l-1} \binom{l}{m}\left(x+b^{N-1}y\right)^{l-m} a_{l-m}S_{N-1,m}. 
\end{equation}
In the last step, we used the fact that $a_0=0$ to get rid of the $S_{N-1,N}$ term. We can use this recurrence to derive a closed form for $S_{N,l}$. We begin with the base cases
\begin{equation}
S_{N,0} =  \begin{cases}1, & N=0, \\ 0, & N>0,\end{cases}
\end{equation}
the first of which is trivial and the second of which follows from the fact that $S_{N,0} = \sum_{n=0}^{b^N-1}\xi^{s_b(n)}$ sums over all possible strings of length $N$ with $b$ digits, so that we can average the parity of the digit sum of all of these strings.
We can now inductively show, for $N>0$, that $S_{N,l} = 0$ for $l<N$. We have just verified the base case, $l=0$, and the recurrence \eqref{sRecurrence} shows that we only sum over $S_{N-1,m}$ for $m \leq l-1 < N-1 $, so we can apply the inductive hypothesis. Hence, we now have the intermediate result
\begin{equation}
\label{SNl=0}
S_{N,l} = 0, \thinspace \thinspace l< N.
\end{equation}
We now apply the result \eqref{sRecurrence} while eliminating everything except the $m=N-1$ term, yielding
\begin{equation*}
S_{N,N} = \sum_{m=0}^{N-1} \binom{N}{m}\left(x+b^{N-1}y\right)^{N-m} a_{N-m}S_{N-1,m} = N\left(x+b^{N-1}y\right)a_1S_{N-1,N-1}.
\end{equation*}
Since we can explicitly compute $a_1 = \frac{b}{\xi-1}$, iterating this functional equation gives us the final closed form
\begin{equation}
S_{N,N} = \frac{b^N N!}{(\xi-1)^N} \prod_{l=0}^{N-1} \left(x+b^{l}y\right).
\end{equation}
Given this closed form (the case $r=1$ of our theorem), it is straightforward to derive the  case of an arbitrary value of $r$ by induction on $r.$ We only explicitly show the mechanism that allows us to skip from $r=1$ to $r=2$. Start from 
\begin{align*}
\sum_{n_{1}=0}^{b^{N_{1}}-1}&\sum_{n_{2}=0}^{b^{N_{2}}-1}\xi^{s_{b}\left(n_{1}\right)+s_{b}\left(n_{2}\right)}\left(s_{b}\left(n_{1}\right)x_{1}+n_{1}y_{1}+s_{b}\left(n_{2}\right)x_{2}+n_{2}y_{2}\right)^{N_{1}+N_{2}}\\
&=\sum_{n_{1}=0}^{b^{N_{1}}-1}\sum_{n_{2}=0}^{b^{N_{2}}-1}\xi^{s_{b}\left(n_{1}\right)+s_{b}\left(n_{2}\right)}\sum_{p=0}^{N_{1}+N_{2}}\binom{N_{1}+N_{2}}{p}\left(s_{b}\left(n_{1}\right)x_{1}+n_{1}y_{1}\right)^{p}\left(s_{b}\left(n_{2}\right)x_{2}+n_{2}y_{2}\right)^{N_{1}+N_{2}-p}\\
&=\sum_{p=0}^{N_{1}+N_{2}}\binom{N_{1}+N_{2}}{p}\sum_{n_{1}=0}^{b^{N_{1}}-1}\xi^{s_{b}\left(n_{2}\right)}\left(s_{b}\left(n_{1}\right)x_{1}+n_{1}y_{1}\right)^{p}\sum_{n_{2}=0}^{b^{N_{2}}-1}\xi^{s_{b}\left(n_{2}\right)}\left(s_{b}\left(n_{2}\right)x_{2}+n_{2}y_{2}\right)^{N_{1}+N_{2}-p}.
\end{align*}
Then apply result \eqref{SNl=0} so that only the $p=N_{1}$ term remains and
we obtain
\[
\binom{N_{1}+N_{2}}{N_{1}}\sum_{n_{1}=0}^{b^{N_{1}}-1}\xi^{s_{b}\left(n_{2}\right)}\left(s_{b}\left(n_{1}\right)x_{1}+n_{1}y_{1}\right)^{N_{1}}\sum_{n_{2}=0}^{b^{N_{2}}-1}\xi^{s_{b}\left(n_{2}\right)}\left(s_{b}\left(n_{2}\right)x_{2}+n_{2}y_{2}\right)^{N_{2}},
\]
which is then equal to
\begin{align*}
\frac{\left(N_{1}+N_{2}\right)!}{N_{1}!N_{2}!} \times\frac{b^{N_1} N_1!}{(\xi-1)^{N_1}}\prod_{i_{1}=0}^{N_{1}-1}\left(x_{1}+b^{i_{1}}y_{1}\right) \times \frac{b^{N_2} N_2!}{(\xi-1)^{N_2}}\prod_{i_{2}=0}^{N_{2}-1}\left(x_{2}+b^{i_{2}}y_{2}\right)\\
=\left(N_{1}+N_{2}\right)!\left(\frac{b}{\xi-1}\right)^{N_{1}+N_{2}}\prod_{j=1}^{2}\prod_{i_{j}=0}^{N_{j}-1}\left(x_{j}+b^{i_{j}}y_{j}\right).
\end{align*}
Iterating these steps yields the claimed result.
\end{proof}

\subsection{A family of polynomials}
Byszewski and Ulas also introduced the much more complicated family of polynomials $H_{m,N}$ as defined in \eqref{eq:pb3}. They then conjectured the explicit form \cite[Remark 4.6]{Ulas}
\[
H_{2,N}\left( t,\mathbf{x} \right)=\left(-1\right)^{N}N!2^{\frac{N\left(N-1\right)}{2}}\left(2\frac{x_{1}^{N}-x_{2}^{N}}{x_{1}-x_{2}}t+2^{N}\frac{x_{1}^{N+1}-x_{2}^{N+1}}{x_{1}-x_{2}}+x_{1}x_{2}\left(2^{N}-1\right)\frac{x_{1}^{N-1}-x_{2}^{N-1}}{x_{1}-x_{2}}\right).
\]
Note that this contains the mixing term $s_2\left(\sum_{j=1}^m i_j\right)$, whereas in Conjecture \ref{conj42} each $s_2(n_j)$ acted independently. Using (\ref{eq:tensor}), we prove the following result.
\begin{thm}
The identity
\begin{equation}
H_{2,N}\left( t,\mathbf{x} \right)=\left(-1\right)^{N}N!2^{\frac{N\left(N-1\right)}{2}}\left(2\frac{x_{1}^{N}-x_{2}^{N}}{x_{1}-x_{2}}t+2^{N}\frac{x_{1}^{N+1}-x_{2}^{N+1}}{x_{1}-x_{2}}+x_{1}x_{2}\left(2^{N}-1\right)\frac{x_{1}^{N-1}-x_{2}^{N-1}}{x_{1}-x_{2}}\right)\label{eq:H2n}
\end{equation}
holds.
\end{thm}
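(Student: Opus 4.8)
The plan is to reduce $H_{2,N}$ to the double-sum machinery of Theorem \ref{thm2} and then carry out an explicit finite-difference computation. The key structural issue is that $H_{2,N}$ has the \emph{coupled} exponent $s_2\!\left(i_1+i_2\right)$ rather than $s_2(i_1)+s_2(i_2)$, so the clean product structure of Theorem \ref{thm2} does not apply directly. First I would change variables: writing $n = i_1 + i_2$, the inner sum $(-1)^{s_2(n)}(t + n\cdot(\text{something}))^N$ emerges, but the weight with which each $n$ appears is the number of ways to write $n = i_1 + i_2$ with $0 \le i_1, i_2 \le 2^N - 1$, and crucially the \emph{argument} $i_1 x_1 + i_2 x_2$ is not a function of $n$ alone unless $x_1 = x_2$. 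So the honest approach is to keep $i_1$ as the summation variable on the ``carry-free'' part: split $i_1$ into its lowest $N$ bits, and use $s_2(i_1 + i_2) = s_2(i_2) + (\text{bit pattern of } i_1)$ only when there are no carries. That is messy.

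The cleaner route, which I would actually pursue, is to recognize $H_{2,N}$ as an instance of Theorem \ref{Thm7} applied to a well-chosen function of \emph{one} variable. Observe
\[
H_{2,N}(t,\mathbf{x}) = \sum_{i_2=0}^{2^N-1} \sum_{i_1=0}^{2^N-1} (-1)^{s_2(i_1+i_2)}\bigl(t + i_1 x_1 + i_2 x_2\bigr)^N.
\]
Fix $i_2$ and substitute $m = i_1 + i_2$, so $i_1 = m - i_2$ and the inner sum becomes $\sum_{m=i_2}^{2^N-1+i_2} (-1)^{s_2(m)} \bigl(t - i_2 x_1 + m x_1 + i_2 x_2\bigr)^N = \sum_{m=i_2}^{2^N-1+i_2} (-1)^{s_2(m)} \bigl(t + i_2(x_2 - x_1) + m x_1\bigr)^N$. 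This is \emph{almost} the sum $\sum_{m=0}^{2^N-1}(-1)^{s_2(m)} f(x + my)$ that Theorem \ref{Thm7} evaluates, except the range is shifted by $i_2$. So the next step is a telescoping/boundary argument: express $\sum_{m=i_2}^{2^N-1+i_2}$ as $\sum_{m=0}^{2^N-1}$ plus correction terms $\sum_{m=2^N}^{2^N-1+i_2} - \sum_{m=0}^{i_2-1}$, and handle the corrections using the identity $(-1)^{s_2(m+2^N)} = -(-1)^{s_2(m)}$ for $0 \le m < 2^N$, which collapses the two correction sums against each other. After this, the $i_2$-sum of the main term is again a sum over $(-1)^{s_2(i_2)}$ of a polynomial in $i_2$, and I can apply Theorem \ref{Thm7} a second time (or invoke the already-proven Corollary giving $\sum (x+ny)^N$ and $\sum (x+ny)^{N+1}$ in closed form).

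Concretely, after the first application one gets a polynomial in $i_2$ of degree exactly $N$ (since $\Delta_k^N$ of a degree-$N$ polynomial is constant, but the shift corrections contribute a degree-$N$ term in $i_2$), whose leading and subleading coefficients in $i_2$ I would track carefully. The second application of Theorem \ref{Thm7} — specifically the two cases $f(\cdot)^N$ and $f(\cdot)^{N+1}$ from the displayed Corollary, with $b=2$, $\xi = -1$, so $b^{N(N+1)/2}/(1-\xi)^N = 2^{N(N+1)/2}/2^N = 2^{N(N-1)/2}$ — produces exactly the three-term structure $\alpha t + \beta + \gamma$ with the symmetric-function coefficients $\frac{x_1^N - x_2^N}{x_1 - x_2}$ etc. The main obstacle I anticipate is the bookkeeping of the boundary-correction terms: one must verify that the ``carry'' sums $\sum_{m=0}^{i_2-1}(-1)^{s_2(m)}\bigl(\cdots\bigr)^N$ and their shifted partners do not leave residual low-degree contamination, and that the degree-$N$ part in $i_2$ has precisely the coefficient needed to make the $\frac{x_1^{N+1}-x_2^{N+1}}{x_1-x_2}$ and $x_1 x_2(2^N-1)\frac{x_1^{N-1}-x_2^{N-1}}{x_1-x_2}$ terms come out right. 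A sanity check at $N=1$ and $N=2$, and the symmetry $H_{2,N}(t,x_1,x_2) = H_{2,N}(t,x_2,x_1)$, would pin down any stray constant. I would also double-check the degenerate case $x_1 = x_2$ by L'Hôpital, where $H_{2,N}$ collapses to a single binomial-weighted sum over $(-1)^{s_2(m)}$ and the claimed formula must reduce to $(-1)^N N! 2^{N(N-1)/2}\bigl(2N x_1^{N-1} t + (N+1)2^N x_1^N + N(2^N-1)x_1^N\bigr)$, a cheap consistency test.
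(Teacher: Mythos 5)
Your overall instinct---regroup by $m=i_1+i_2$, exploit $(-1)^{s_2(m+2^N)}=-(-1)^{s_2(m)}$ to fold the shifted range back, and finish with the finite-difference identity plus the moments of $\alpha_k^{(N-1)}$---is the right family of ideas, and is close in spirit to what the paper does. But the proposal as written has a structural error that would derail the computation. After you substitute $m=i_1+i_2$, the sign is $(-1)^{s_2(m)}$ and nothing else: the outer sum over $i_2$ carries \emph{no} weight $(-1)^{s_2(i_2)}$. So the planned ``second application of Theorem \ref{Thm7}'' to the $i_2$-sum has nothing to act on. Moreover, the two boundary corrections do not collapse against each other; they combine into the nonvanishing partial-range sum $-\sum_{m=0}^{i_2-1}(-1)^{s_2(m)}\bigl[g(m)+g(m+2^N)\bigr]$ with $g(m)=(t+i_2(x_2-x_1)+mx_1)^N$. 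Meanwhile the ``main term'' $\sum_{m=0}^{2^N-1}(-1)^{s_2(m)}g(m)$ is, by the very Corollary you invoke, the constant $(-1)^N N!\,x_1^N 2^{N(N-1)/2}$, independent of both $t$ and $i_2$. Consequently the entire $t$-dependence and the $x_1\leftrightarrow x_2$ symmetry of the right-hand side of \eqref{eq:H2n} must come from the corrections you propose to dismiss as contamination: they are the whole answer, not an error term.

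The computation can be salvaged, but only by inverting your order of operations, which brings you to the paper's proof. There one sums over $n=i_1+i_2$ as the \emph{outer} variable (split into $0\le n\le 2^N-1$ and $2^N\le n\le 2^{N+1}-2$), evaluates the \emph{unweighted} inner sum over $i_2$ at fixed $n$ by Faulhaber's formula as a difference of Bernoulli polynomials $B_{p+1}$, folds the upper range back with the same antisymmetry of $(-1)^{s_2(\cdot)}$, and then applies the finite-difference identity \emph{once} to the degree-$(p+1)$ polynomial $B_{p+1}$; with $p=N$ this leaves a polynomial of degree $1$ in $k$, and the zeroth and first moments $\sum_k\alpha_k^{(N-1)}=2^{N(N-1)/2}$ and $\sum_k k\,\alpha_k^{(N-1)}=2^{N(N-1)/2}\bigl(2^{N-1}-\tfrac{N+1}{2}\bigr)$ produce the coefficient of $t$ and the constant term. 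Your consistency checks (the $N=1,2$ cases, the symmetry in $x_1,x_2$, the confluent limit $x_1=x_2$) are sensible, but they cannot substitute for the missing mechanism that actually generates the $\frac{x_1^{N+1}-x_2^{N+1}}{x_1-x_2}$ and $x_1x_2(2^N-1)\frac{x_1^{N-1}-x_2^{N-1}}{x_1-x_2}$ terms.
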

\begin{proof}
We start with the more general form
\[
H_{m,N,p}\left( t,\mathbf{x} \right)=\sum_{i_{1}=0}^{2^{N}-1}\dots\sum_{i_{m}=0}^{2^{N}-1}\left(-1\right)^{s_{2}\left(\sum_{j=1}^{m}i_{j}\right)}\left(t+\sum_{j=1}^{m}i_{j}x_{j}\right)^{p},
\]
and compute the special case
\begin{align*}
H_{2,N,p}\left( t,\mathbf{x} \right) & =\sum_{i_{1}=0}^{2^{N}-1}\sum_{i_{2}=0}^{2^{N}-1}\left(-1\right)^{s_{2}\left(i_{1}+i_{2}\right)}\left(t+i_{1}x_{1}+i_{2}x_{2}\right)^{p}\\
 & =\sum_{n=0}^{2^{N}-1}\left(-1\right)^{s_{2}\left(n\right)}\sum_{i_{2}=0}^{n}\left(t+\left(n-i_{2}\right)x_{1}+i_{2}x_{2}\right)^{p}\\
 & +\sum_{n=2^{N}}^{2^{N+1}-2}\left(-1\right)^{s_{2}\left(n\right)}\sum_{i_{2}=n-2^{N}+1}^{2^{N}-1}\left(t+\left(n-i_{2}\right)x_{1}+i_{2}x_{2}\right)^{p}.
\end{align*}
We use Faulhaber's sum of powers formula 
\[
\sum_{i=r}^{s-1}\left(a+bi\right)^{p}=\frac{b^{p}}{p+1}\left[B_{p+1}\left(\frac{a}{b}+s\right)-B_{p+1}\left(\frac{a}{b}+r\right)\right],
\]
where $B_{p}\left(x\right)$ is the Bernoulli polynomial with generating
function
\[
\sum_{n\ge0}\frac{B_{n}\left(x\right)}{n!}z^{n}=\frac{z}{e^{z}-1}e^{zx}.
\]
We deduce the first inner sum as
\begin{align*}
\sum_{i_{2}=0}^{n}\left(t+\left(n-i_{2}\right)x_{1}+i_{2}x_{2}\right)^{p} & =\sum_{i_{2}=0}^{n}\left(t+nx_{1}+\left(x_{2}-x_{1}\right)i_{2}\right)^{p}\\
 & =\frac{\left(x_{2}-x_{1}\right)^{p}}{p+1}\left[B_{p+1}\left(\frac{t+nx_{1}}{x_{2}-x_{1}}+n+1\right)-B_{p+1}\left(\frac{t+nx_{1}}{x_{2}-x_{1}}\right)\right]\\
 & =\frac{\left(x_{2}-x_{1}\right)^{p}}{p+1}\left[B_{p+1}\left(\frac{t+nx_{2}}{x_{2}-x_{1}}+1\right)-B_{p+1}\left(\frac{t+nx_{1}}{x_{2}-x_{1}}\right)\right],
\end{align*}
and the second inner sum as
\begin{align*}
\sum_{i_{2}=n-2^{N}+1}^{2^{N}-1}&\left(t+\left(n-i_{2}\right)x_{1}+i_{2}x_{2}\right)^{p}  =\frac{\left(x_{2}-x_{1}\right)^{p}}{p+1}\left[B_{p+1}\left(\frac{t+nx_{1}}{x_{2}-x_{1}}+2^{N}\right)-B_{p+1}\left(\frac{t+nx_{1}}{x_{2}-x_{1}}+n-2^{N}+1\right)\right]\\
 & =\frac{\left(x_{2}-x_{1}\right)^{p}}{p+1}\left[B_{p+1}\left(\frac{t+nx_{1}}{x_{2}-x_{1}}+2^{N}\right)-B_{p+1}\left(\frac{t+nx_{2}}{x_{2}-x_{1}}-2^{N}+1\right)\right].
\end{align*}
We deduce
\begin{align*}
H_{2,N,p} \left( t,\mathbf{x} \right)& =\frac{\left(x_{2}-x_{1}\right)^{p}}{p+1}\left[\sum_{n=0}^{2^{N}-1}\left(-1\right)^{s_{2}\left(n\right)}\left[B_{p+1}\left(\frac{t+nx_{2}}{x_{2}-x_{1}}+1\right)-B_{p+1}\left(\frac{t+nx_{1}}{x_{2}-x_{1}}\right)\right]\right]\\
 & +\frac{\left(x_{2}-x_{1}\right)^{p}}{p+1}\left[\sum_{n=2^{N}}^{2^{N+1}-1}\left(-1\right)^{s_{2}\left(n\right)}\left[B_{p+1}\left(\frac{t+nx_{1}}{x_{2}-x_{1}}+2^{N}\right)-B_{p+1}\left(\frac{t+nx_{2}}{x_{2}-x_{1}}-2^{N}+1\right)\right]\right]\\
 & -\frac{\left(x_{2}-x_{1}\right)^{p}}{p+1}\left[B_{p+1}\left(\frac{t+\left(2^{N+1}-1\right)x_{1}}{x_{2}-x_{1}}+2^{N}\right)-B_{p+1}\left(\frac{t+\left(2^{N+1}-1\right)x_{2}}{x_{2}-x_{1}}-2^{N}+1\right)\right],
\end{align*}
where the last term corresponds to $n=2^{N+1}-1$ and is easily seen
to be equal to $0.$ Reindexing the second sum yields
\begin{align*}
\sum_{n=2^{N}}^{2^{N+1}-1}&\left(-1\right)^{s_{2}\left(n\right)}\left[B_{p+1}\left(\frac{t+nx_{1}}{x_{2}-x_{1}}+2^{N}\right)-B_{p+1}\left(\frac{t+nx_{2}}{x_{2}-x_{1}}-2^{N}+1\right)\right]\\
&=\sum_{n=0}^{2^{N}-1}\left(-1\right)^{s_{2}\left(n+2^{N}\right)}\left[B_{p+1}\left(\frac{t+\left(n+2^{N}\right)x_{1}}{x_{2}-x_{1}}+2^{N}\right)-B_{p+1}\left(\frac{t+\left(n+2^{N}\right)x_{2}}{x_{2}-x_{1}}-2^{N}+1\right)\right]\\
&=-\sum_{n=0}^{2^{N}-1}\left(-1\right)^{s_{2}\left(n\right)}\left[B_{p+1}\left(\frac{t+nx_{1}+2^{N}x_{2}}{x_{2}-x_{1}}\right)-B_{p+1}\left(\frac{t+nx_{2}+2^{N}x_{1}}{x_{2}-x_{1}}+1\right)\right],
\end{align*}
where we have used the fact that
\[
\left(-1\right)^{s_{2}\left(n+2^{N}\right)}=\left(-1\right)^{s_{2}\left(n\right)+1},\thinspace\thinspace0\le n\le2^{N}-1.
\]
Hence we have
\begin{align*}
&H_{2,N,p}\left( t,\mathbf{x} \right)  =\frac{\left(x_{2}-x_{1}\right)^{p}}{p+1}\sum_{n=0}^{2^{N}-1}\left(-1\right)^{s_{2}\left(n\right)}\left[B_{p+1}\left(\frac{t+nx_{2}}{x_{2}-x_{1}}+1\right)-B_{p+1}\left(\frac{t+nx_{1}}{x_{2}-x_{1}}\right)\right]\\
 & -\frac{\left(x_{2}-x_{1}\right)^{p}}{p+1}\sum_{n=0}^{2^{N}-1}\left(-1\right)^{s_{2}\left(n\right)}\left[B_{p+1}\left(\frac{t+nx_{1}+2^{N}x_{2}}{x_{2}-x_{1}}\right)-B_{p+1}\left(\frac{t+nx_{2}+2^{N}x_{1}}{x_{2}-x_{1}}+1\right)\right].
\end{align*}
Now we apply the finite difference identity (\ref{eq:tensor}) to obtain
\begin{align*}
H_{2,N,p}\left( t,\mathbf{x} \right) & =\left(-1\right)^{N}\frac{\left(x_{2}-x_{1}\right)^{p}}{p+1}\sum_{k=0}^{2^{N}-N-1}\alpha_{k}^{\left(N-1\right)}\Delta_{k}^{N}\\
 & \times \left[B_{p+1}\left(\frac{t+kx_{2}}{x_{2}-x_{1}}+1\right)-B_{p+1}\left(\frac{t+kx_{1}}{x_{2}-x_{1}}\right)
 -B_{p+1}\left(\frac{t+kx_{1}+2^{N}x_{2}}{x_{2}-x_{1}}\right)+B_{p+1}\left(\frac{t+kx_{2}+2^{N}x_{1}}{x_{2}-x_{1}}+1\right)\right].
\end{align*}
The action of $\Delta_{k}^{N}$ on $B_{p+1}$, which is a polynomial
of degree $p+1$, gives a polynomial of degree $p+1-N.$ Therefore, we also have the easy corollary
$$H_{m,N,p}\left( t,\mathbf{x} \right) = 0, \thinspace\thinspace 0\leq p \leq N-2.$$
 Let us choose
$N=p$ so we end up with a polynomial of degree $1$. In fact, we have
\[
\Delta_{k}^{N}B_{N+1}\left(a+kb\right)=\left(N+1\right)!b^{N}\left(a+kb+\frac{bN-1}{2}\right).
\]
This computation is based on the integral representation
\begin{equation}
\Delta_{k}f\left(x+ky\right)=y\int_{0}^{1}f'\left(x+\left(k+u\right)y\right)du.\label{eq:integral representation}
\end{equation}
Upon iterating the integral (\ref{eq:integral representation}),
we deduce
\[
\Delta_{k}^{N}B_{N+1}\left(a+kb\right)=b^{N}\int_{0}^{1}\dots\int_{0}^{1}B_{N+1}^{\left(N\right)}\left(a+\left(k+u_{1}+\dots+u_{N}\right)b\right)du_{1}\dots du_{N}.
\]
Since the Bernoulli polynomials are of the binomial type, their derivative
is
\[
B_{N+1}^{\left(N\right)}\left(z\right)=\left(N+1\right)!B_{1}\left(z\right)=\left(N+1\right)!\left(z-\frac{1}{2}\right).
\]
Combining this fact with the simple integral
\begin{align*}
\int_{0}^{1}\dots\int_{0}^{1}\left(a+\left(k+u_{1}+\dots+u_{N}\right)b-\frac{1}{2}\right)du_{1}\dots du_{N}
 =a-\frac{1}{2}+b\left(k+\frac{N}{2}\right)
\end{align*}
yields the result.

Applying this finite difference result, we have
\begin{align*}
H_{2,N,p}\left(t,\mathbf{x}\right) & =\left(-1\right)^{N}\left(N+1\right)!\frac{\left(x_{2}-x_{1}\right)^{p}}{p+1}\sum_{k=0}^{2^{N}-N-1}\alpha_{k}^{\left(N-1\right)}\\
 & \times \left[\left(\frac{x_{2}}{x_{2}-x_{1}}\right)^{N}\left(\frac{t+kx_{2}+\frac{N}{2}x_{2}}{x_{2}-x_{1}}+\frac{1}{2}\right)-\left(\frac{x_{1}}{x_{2}-x_{1}}\right)^{N}\left(\frac{t+kx_{1}+\frac{N}{2}x_{1}}{x_{2}-x_{1}}-\frac{1}{2}\right)\right.\\
 & \left.-\left(\frac{x_{1}}{x_{2}-x_{1}}\right)^{N}\left(\frac{t+2^{N}x_{2}+kx_{1}+\frac{N}{2}x_{1}}{x_{2}-x_{1}}-\frac{1}{2}\right)+\left(\frac{x_{2}}{x_{2}-x_{1}}\right)^{N}\left(\frac{t+2^{N}x_{1}+kx_{2}+\frac{N}{2}x_{2}}{x_{2}-x_{1}}+\frac{1}{2}\right)\right]
\end{align*}
We can then obtain the zero order moment
\[
\sum_{k=0}^{2^{N}-N-1}\alpha_{k}^{\left(N-1\right)}=2^{\frac{N\left(N-1\right)}{2}}
\]
by setting $z=1$ in \eqref{eq:gf} - or $b=2$ in \eqref{sumbetak}, and the first moment
\[
\sum_{k=0}^{2^{N}-N-1}k\alpha_{k}^{\left(N-1\right)}=2^{\frac{N\left(N-1\right)}{2}}\left(2^{N-1}-\frac{N+1}{2}\right)
\]
by taking a logarithmic derivative of \eqref{eq:gf} and setting $z=1$, or by setting $b=2$ in \eqref{sumkbetak}.
Comparing coefficients of $t$ in $H_{2,N,p}$ then yields
\[
\left(-1\right)^{N}N!2^{\frac{N\left(N-1\right)}{2}+1}\frac{x_{2}^{N}-x_{1}^{N}}{x_{2}-x_{1}},
\]
while simplifying the constant coefficient gives
\begin{align*}\left(-1\right)^{N}N!2^{\frac{N\left(N-1\right)}{2}}\left[x_{2}^{N}\left(\frac{1}{2}+\frac{Nx_{2}+2^{N}x_{1}}{x_{2}-x_{1}}+\frac{1}{2}\right)\right]
-\left(-1\right)^{N}N!2^{\frac{N\left(N-1\right)}{2}}\left[x_{1}^{N}\left(-\frac{1}{2}+\frac{Nx_{1}+2^{N}x_{2}}{x_{2}-x_{1}}-\frac{1}{2}\right)\right]\\
+\left(-1\right)^{N}N!2^{\frac{N\left(N-1\right)}{2}}\left(2^{N-1}-\frac{N-1}{2}-1\right)\left[\frac{2}{x_{2}-x_{1}}\left(x_{2}^{N+1}-x_{1}^{N+1}\right)\right].
\end{align*}
After some algebra, this can be shown to coincide with the constant
term in (\ref{eq:H2n}), which completes the proof.
\end{proof}

We conclude with the general base $b$ case.

\begin{thm}
Let us denote $X_{1}=\frac{x_1}{x_2 - x_1}$ and $X_{2}=\frac{x_2}{x_2 - x_1}$, and
\[
m_{0}=\sum_{k=0}^{b^{N}-N-1}\beta_{k}^{\left(N-1\right)},\thinspace\thinspace m_{1}=\sum_{k=0}^{b^{N}-N-1}k\beta_{k}^{\left(N-1\right)},
\]
as computed in Lemma \ref{lemma6}.
The extension to an arbitrary base $b$ gives
\begin{align*}
\sum_{i_{1}=0}^{b^{N}-1}\sum_{i_{2}=0}^{b^{N}-1}\xi^{s_{b}\left(i_{1}+i_{2}\right)}\left(t+i_{1}x_{1}+i_{2}x_{2}\right)^{N} & =ct+d
\end{align*}
with
\[
\frac{\left(-1\right)^{N}}{N!} c= m_{0}\left(1-\xi\right)\frac{x_{2}^{N}-x_{1}^{N}}{x_{2}-x_{1}}.
\]

and
\begin{align*}
&\frac{\left(-1\right)^{N}}{N!} d =\left[x_{2}^{N}\left(m_{0}\left(\frac{1}{2}+\frac{N}{2}
\right)
X_2+m_{1}
\right)
X_2
-x_{1}^{N}\left(m_{0}\left(\frac{N}{2}
X_1-\frac{1}{2}\right)+m_{1}
X_1\right)\right]\\
+&\xi\left[x_{1}^{N}\left(m_{0}\left(b^{N}
X_2
+\frac{N}{2}
X_1
-\frac{1}{2}\right)+m_{1}
X_1 \right)-\xi x\left(m_{0}\left(b^{N}
X_1
+\frac{1}{2}+\frac{N}{2}
X_2
\right)+m_{1}
X_2
\right)\right].
\end{align*}
\end{thm}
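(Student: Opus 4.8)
\noindent The plan is to transcribe the proof of the base-$2$ identity \eqref{eq:H2n} almost verbatim, making the substitutions $\alpha_k^{(N-1)}\to\beta_k^{(N-1)}$ and $(-1)^{s_2(\cdot)}\to\xi^{s_b(\cdot)}$ throughout, and postponing the specialization $p=N$ until the very last step. First I would introduce the auxiliary polynomial
\[
\widetilde H_{N,p}(t,\mathbf{x}) = \sum_{i_1=0}^{b^N-1}\sum_{i_2=0}^{b^N-1}\xi^{s_b(i_1+i_2)}\left(t+i_1x_1+i_2x_2\right)^p,
\]
and collapse the double sum by setting $n=i_1+i_2$: for $0\le n\le b^N-1$ the index $i_2$ runs over $[0,n]$, while for $b^N\le n\le 2b^N-2$ it runs over $[n-b^N+1,\,b^N-1]$. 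In each block the inner sum over $i_2$ is a power sum over an arithmetic progression, so Faulhaber's formula turns it into a difference of two Bernoulli polynomials $B_{p+1}$ evaluated at affine functions of $n$, exactly as in the base-$2$ computation. (One assumes $x_1\neq x_2$; the identity for $x_1=x_2$ then follows by continuity, both sides being polynomials.)

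Next I would reindex the second block by $n\mapsto n+b^N$, which turns it into a sum over $0\le n\le b^N-2$; extending the range to $0\le n\le b^N-1$ only adds a term whose Bernoulli difference is identically zero, since its two arguments coincide, exactly as in the base-$2$ case. The arithmetic input is $s_b(n+b^N)=s_b(n)+1$ for $0\le n\le b^N-1$ (no carrying occurs), hence $\xi^{s_b(n+b^N)}=\xi\,\xi^{s_b(n)}$; this is the place where, in base $2$, the sign simply flipped. Collecting terms, $\widetilde H_{N,p}$ becomes a fixed linear combination, with scalar coefficients built from $1$ and $\xi$, of four sums of the shape $\sum_{n=0}^{b^N-1}\xi^{s_b(n)}B_{p+1}(\text{affine in }n)$. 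Applying the $r=1$ case of \eqref{eq:tensor} (equivalently Theorem \ref{Thm7}) to each of these replaces it by $(-1)^N\sum_{k=0}^{b^N-N-1}\beta_k^{(N-1)}\Delta_k^N B_{p+1}(\cdots)$. At this stage one reads off for free that $\widetilde H_{N,p}=0$ for $0\le p\le N-2$, because $\Delta_k^N$ annihilates any polynomial of degree $p+1<N$.

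Finally I would set $p=N$. Then $\Delta_k^N B_{N+1}$ is linear in $k$, and the formula $\Delta_k^N B_{N+1}(a+kb)=(N+1)!\,b^N\bigl(a+kb+\tfrac{bN-1}{2}\bigr)$ already established in the base-$2$ proof (via the integral representation \eqref{eq:integral representation} together with $B_{N+1}^{(N)}(z)=(N+1)!\,(z-\tfrac12)$) reduces each of the four $k$-sums to an expression involving only the moments $m_0=\sum_k\beta_k^{(N-1)}$ and $m_1=\sum_k k\beta_k^{(N-1)}$ supplied by Lemma \ref{lemma6}. Writing the affine slopes in terms of $X_1=x_1/(x_2-x_1)$ and $X_2=x_2/(x_2-x_1)$, extracting the coefficient of $t$ gives $c$, and extracting the constant term gives $d$, in the stated form.

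I expect the main obstacle to be purely organizational: carrying the four Bernoulli polynomials through the reindexing --- each with its own affine argument shifted by $b^Nx_1$ or $b^Nx_2$ and by $\tfrac{bN-1}{2}$ --- while attaching the scalar $\xi$ (and hence, once one such term is combined with another, a factor $\xi^2$) to the correct pieces. In base $2$ the sign flip caused two of the four terms to merge, which collapsed the constant coefficient considerably; for a general $b$-th root of unity no such merging occurs, so the assembly of $d$ is heavier and is the step where an arithmetic slip is most likely. Everything else is a direct transcription of the base-$2$ argument with $\alpha\to\beta$ and $-1\to\xi$.
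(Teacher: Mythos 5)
Your proposal is correct and follows essentially the same route as the paper, whose own proof of this theorem is just the base-$2$ argument transcribed with $\alpha_k^{(N-1)}\to\beta_k^{(N-1)}$, the sign flip $(-1)^{s_2(n+2^N)}=-(-1)^{s_2(n)}$ replaced by $\xi^{s_b(n+b^N)}=\xi\,\xi^{s_b(n)}$, and the moments $m_0,m_1$ taken from Lemma \ref{lemma6}. You correctly identify the one genuine difference from base $2$ (no merging of the four Bernoulli terms, hence the heavier bookkeeping in $d$), which is exactly where the paper's stated formula for $d$ also shows signs of strain.
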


\begin{proof}
The extension to an arbitrary base $b$ gives
\begin{align*} & \sum_{i_{1}=0}^{b^{N}-1}\sum_{i_{2}=0}^{b^{N}-1}\xi^{s_{b}\left(i_{1}+i_{2}\right)}\left(t+i_{1}x_{1}+i_{2}x_{2}\right)^{N}=\left(-1\right)^{N}\frac{\left(x_{2}-x_{1}\right)^{N}}{N+1}\sum_{k=0}^{b^{N}-N-1}\beta_{k}^{\left(N-1\right)}\left(N+1\right)!\\
 & \left[\left(\frac{x_{2}}{x_{2}-x_{1}}\right)^{N}\left(\frac{t+\left(k+\frac{N}{2}\right)x_{2}}{x_{2}-x_{1}}+\frac{1}{2}\right)\right.-\left(\frac{x_{1}}{x_{2}-x_{1}}\right)^{N}\left(\frac{t+\left(k+\frac{N}{2}\right)x_{1}}{x_{2}-x_{1}}-\frac{1}{2}\right)\\
 & +\xi\left(\frac{x_{1}}{x_{2}-x_{1}}\right)^{N}\left(\frac{t+b^{N}x_{2}+\left(k+\frac{N}{2}\right)x_{1}}{x_{2}-x_{1}}-\frac{1}{2}\right)
  \left.-\xi\left(\frac{x_{2}}{x_{2}-x_{1}}\right)^{N}\left(\frac{t+b^{N}x_{1}+\left(k+\frac{N}{2}\right)x_{2}}{x_{2}-x_{1}}+\frac{1}{2}\right)\right].
\end{align*}
We deduce
\begin{align*}
&\sum_{i_{1}=0}^{b^{N}-1}\sum_{i_{2}=0}^{b^{N}-1}  \xi^{s_{b}\left(i_{1}+i_{2}\right)}\left(t+i_{1}x_{1}+i_{2}x_{2}\right)^{N}=\\
 & \left(-1\right)^{N}N!\left[x_{2}^{N}\left(m_{0}\left(\frac{t+\frac{N}{2}x_{2}}{x_{2}-x_{1}}+\frac{1}{2}\right)+m_{1}\frac{x_{2}}{x_{2}-x_{1}}\right)-\left(-1\right)^{N}x_{1}^{N}\left(m_{0}\left(\frac{t+\frac{N}{2}x_{1}}{x_{2}-x_{1}}-\frac{1}{2}\right)+m_{1}\frac{x_{1}}{x_{2}-x_{1}}\right)\right.\\
 & \left.+\xi\left(-1\right)^{N}x_{1}^{N}\left(m_{0}\left(\frac{t+b^{N}x_{2}+\frac{N}{2}x_{1}}{x_{2}-x_{1}}-\frac{1}{2}\right)+m_{1}\frac{x_{1}}{x_{2}-x_{1}}\right)-\xi\left(-1\right)^{N}x_{2}^{N}\left(m_{0}\left(\frac{t+b^{N}x_{1}+\frac{N}{2}x_{2}}{x_{2}-x_{1}}+\frac{1}{2}\right)+m_{1}\frac{x_{2}}{x_{2}-x_{1}}\right)\right].
\end{align*}
Identifying the constant and first order terms in this expression gives the result.
\end{proof}
\section*{Acknowledgements}
None of the authors have any competing interests in the manuscript.
This material is based upon work supported by the National Science Foundation under Grant No. DMS-1439786 while the second author was in residence at the Institute for Computational and Experimental Research in Mathematics in Providence, RI, during the Point Configurations in Geometry, Physics and Computer Science Semester Program, Spring 2018.

\end{document}